\newtheorem{THEOR}{Theorem}[section]
\newtheorem{PROP}[THEOR]{Proposition}
\newtheorem{LEM}[THEOR]{Lemma}
\newtheorem{COR}[THEOR]{Corollary}
\newtheorem{REM}[THEOR]{Remark}
\theoremstyle{definition}
\numberwithin{equation}{section}
\renewcommand{\phi}{\varphi}
\newcommand{\lra}{\longrightarrow}
\newcommand{\ra}{\rightarrow}
\newcounter{example}[section]
\begin{document}

	\title{Second fundamental form and higher Gaussian maps}
	
	\dedicatory{Dedicated to the memory of Alberto Collino}

	 \author[Paola Frediani]{Paola Frediani} \address{ Dipartimento di
	Matematica, Universit\`a di Pavia, via Ferrata 5, I-27100 Pavia,
	 Italy } \email{{\tt paola.frediani@unipv.it}}

	\begin{abstract}
	In this paper we show a relation between higher even Gaussian maps of the canonical bundle on a smooth projective curve of genus $g \geq 4$ and the second fundamental form of the Torelli map. This generalises a result obtained by Colombo, Pirola and Tortora on the second Gaussian map and the second fundamental form. As a consequence, we prove that for any non-hyperelliptic curve, the Gaussian map $\mu_{6g-6}$ is injective,  hence all even Gaussian maps $\mu_{2k}$ are identically zero for all $k >3g-3$. We also give an estimate for the rank of $\mu_{2k}$ for $g-1 \leq k \leq 3g-3.$ 
		\end{abstract}

 \thanks{The author was partially supported MIUR PRIN 2017
``Moduli spaces and Lie Theory'' ,  by MIUR, Programma Dipartimenti di Eccellenza(2018-2022) - Dipartimento di Matematica ``F. Casorati'', Universit\`a degli Studi di Pavia and by INdAM (GNSAGA) }

	\date{}
	\maketitle

	\section{Introduction}
	
The purpose of this paper is to show a relation between the behaviour of even higher Gaussian maps $\mu_{2k}$ of the canonical bundle of a smooth projective curve of genus $g \geq 4 $ and the second fundamental form of the Torelli map. 

Denote by  $j: {\mathcal M}_g \rightarrow {\mathcal A}_g$ the Torelli map. It is an orbifold immersion outside the hyperelliptic locus.  The space ${\mathcal A}_g$ is the quotient of the Siegel space ${\mathcal H}_g$ by the action of the symplectic group $Sp(2g, {\mathbb Z})$, hence it inherits an orbifold metric, which is induced by the symmetric metric on the Hermitian symmetric  domain ${\mathcal H}_g$. 

One expects the Jacobian locus, that is the image of the Torelli map, to be rather curved with respect to the Siegel metric. One way to establish these metric properties of the Jacobian locus is to study the second fundamental form of the Torelli map on the complement of the hyperelliptic locus. This turns out to be quite difficult, also because the second fundamental form is non-holomorphic. Nevertheless the composition of the second fundamental form with a suitable projection turns out to be holomorphic. 
More precisely, in \cite{cpt} it was proven that the second fundamental form of the Torelli map at a non-hyperelliptic curve $[C]$ with respect to this metric is a lifting of the second Gaussian map $\mu_2$ of the canonical bundle on $C$, that varies holomorphically in the moduli space. This result has been very useful to study geometric properties of the Jacobian locus, namely to determine the holomorphic sectional curvature of ${\mathcal M}_g$ on the tangent directions given by Schiffer variations  (\cite{cf-trans}), and  to give bounds on the dimension of a germ of a totally geodesic subvariety of ${\mathcal A}_g$ contained in the Torelli locus (\cite{cfg}, \cite{fp}, \cite{gpt}). 

The second fundamental form of the Torelli map at a non-hyperelliptic curve $[C] \in {\mathcal M}_g$ is a map $ I_2(\omega_C) \rightarrow Sym^2 H^0(C, \omega_C^{\otimes 2})$, where $ I_2(\omega_C)$ is the vector space of quadrics containing the canonical image of $C$. In \cite{cpt} it is proven that its  composition  with the multiplication map $Sym^2 H^0(C, \omega_C^{\otimes 2}) \rightarrow H^0(C, \omega_C^{\otimes 4})$ is the second Gaussian map $\mu_2$. To prove  this equality, the authors first showed that the second fundamental form equals the so-called Hodge-Gaussian map $\rho$ and then,  $\forall Q \in I_2(\omega_C)$, they computed $\rho(Q)$ on the tensors $\xi_p \otimes \xi_p$, where $\xi_p \in H^1(C, T_C)$ is a Schiffer variation at a point $p \in C$.  Recall that $I_2(\omega_C) = ker (\mu_0)$, where $\mu_0: Sym^2H^0(C, \omega_C) \ra H^0(C, \omega_C^{\otimes 2})$ is the multiplication map and the second Gaussian map  $\mu_2$ is defined on $I_2(\omega_C) = ker(\mu_0)$ with values in $H^0 (C, \omega_C^{\otimes 4})$. 

In this paper we generalise the computation  of $\rho$ on quadrics in $Ker( \mu_0)$ done in \cite{cpt}, by  computing $\rho(Q)$ for quadrics contained in the kernel of higher even gaussian maps $\mu_{2k}$  on tensor products of higher Schiffer variations $\xi_p^k \otimes \xi_p^n$ at a point $p$. We notice that this is the first explicit computation of the second fundamental form on tangent directions different from the Schiffer variations $\xi_p$. We prove the following result (Theorem \ref{thmA}).

\begin{THEOR}
\label{thmA-intro}
Take $p$ a general point in $C$ and $1\leq n \leq 3g-3$. Then for every $Q \in ker(\mu_{2n-2})$, and for every $v, w \in \langle \xi^1_p,..., \xi^{n-1}_p \rangle$, we have 
\begin{enumerate}
\item $$\rho(Q)(v)(w) = \rho(Q)(v \odot w) =0.$$ 
\item $$\rho(Q)(\xi^{n}_{p} )(\xi^l_p)= \rho(Q)(\xi^{n}_{p} \odot \xi^l_p) = 0, \  \forall l \leq n-1.$$ 
\item If $Q \in  ker(\mu_{2n-2})$, and $Q \not\in ker(\mu_{2n})$, then 
$$\rho(Q)(\xi^{n}_p)(\xi^{n}_p)  = \rho(Q)(\xi^{n}_p \odot \xi^{n}_p) =  c_{n, 2n-2} \mu_{2n}(Q)(p) \neq 0.$$

\item For $n \geq 2$ the same result holds if $C$ is a general curve of genus $g$, $p \in C$ is any point and $n < \frac{\sqrt{g-2} }{4} -1$. 

\item For $n=1$ the same result  holds  for every curve $C$ of genus at least 5 which is non hyperelliptic and non trigonal and $\forall p \in C$. 
\end{enumerate}

\end{THEOR}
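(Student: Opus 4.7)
The strategy is to extend the integral-to-residue reduction for the Hodge--Gaussian map $\rho$ carried out in \cite{cpt} from ordinary Schiffer variations $\xi_p$ to higher Schiffer variations $\xi_p^k$, ultimately reducing each value $\rho(Q)(\xi_p^n, \xi_p^m)$ to a local computation at $p$. Viewing $Q \in I_2(\omega_C)$ as a symmetric section of $\omega_C \boxtimes \omega_C$ on $C \times C$ vanishing on the diagonal $\Delta$, the inductive definition of the higher even Gaussian maps becomes the geometric statement $Q \in \ker(\mu_{2k-2})$ if and only if $Q$ vanishes to order $2k$ on $\Delta$, and in that case $\mu_{2k}(Q)(p)$ is the leading normal jet of $Q$ along $\Delta$ at $p$, extracted by dividing out $u^{2k}$ with $u = z_1 - z_2$.

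Fix a local coordinate $z$ at $p$ and represent $\xi_p^n \in H^1(C, T_C)$ by the Dolbeault cocycle $\bar\partial\bigl(\chi(z)\,z^{-n}\,\partial_z\bigr)$ with $\chi$ a smooth cutoff supported near $p$. Substituting $\xi_p^n$ and $\xi_p^m$ into the cohomological integral expressing $\rho(Q)$ and applying Stokes' theorem, the integral localises to an iterated residue at $(p, p) \in C \times C$; an elementary expansion then identifies it as
\[
\rho(Q)(\xi_p^n, \xi_p^m) \;=\; c_{n,m}\,\bigl(\partial_{z_1}^{\,n-1}\partial_{z_2}^{\,m-1}\widetilde Q\bigr)(p, p),
\]
for an explicit non-zero combinatorial constant $c_{n,m}$, where $\widetilde Q$ is the local holomorphic expression obtained from $Q$ by dividing out its (automatic) order-two vanishing on $\Delta$. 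The case $n = m = 1$ recovers the identity $\rho(Q)(\xi_p, \xi_p) = c\,\mu_2(Q)(p)$ of \cite{cpt}.

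Given this residue formula, parts (1)--(3) become Taylor-degree comparisons. Under the hypothesis $Q \in \ker(\mu_{2n-2})$ the expression $\widetilde Q$ vanishes to order $2n - 2$ on $\Delta$, so all its Taylor coefficients at $(p, p)$ of total bidegree strictly less than $2n - 2$ vanish. For $v, w \in \langle \xi_p^1, \ldots, \xi_p^{n-1}\rangle$ the required coefficient has total degree at most $2n - 4$, yielding (1); for $v = \xi_p^n$ and $w = \xi_p^l$ with $l \leq n - 1$ the total degree is at most $2n - 3$, yielding (2); in case (3) the total degree is exactly $2n - 2$, so only the leading normal-jet of $\widetilde Q$ contributes, and the binomial expansion of $(z_1 - z_2)^{2n-2}$ extracts the factor $\binom{2n-2}{n-1}(-1)^{n-1}$, which combines with $c_{n,n}$ to give the asserted constant $c_{n, 2n-2}$ multiplying $\mu_{2n}(Q)(p)$.

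For parts (4) and (5) the remaining issue is to ensure that $\xi_p^1, \ldots, \xi_p^n$ are non-zero and linearly independent in $H^1(C, T_C)$: under Serre duality this is equivalent to the surjectivity of the jet evaluation $H^0(\omega_C^{\otimes 2}) \to \omega_C^{\otimes 2} \otimes \mathcal{O}_C/\mathfrak{m}_p^n$, equivalently $h^0(\omega_C^{\otimes 2}(-np)) = 3g - 3 - n$. The theorem's hypotheses are calibrated to guarantee this: non-hyperelliptic, non-trigonal with $g \geq 5$ suffices in the $n = 1$ case uniformly in $p$, while the bound $n < \sqrt{g-2}/4 - 1$ on a general curve for $n \geq 2$ is exactly the range in which standard Weierstrass-point estimates for $\omega_C^{\otimes 2}$ preclude the vanishing of the relevant Wronskian at $p$. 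The main technical obstacle is the residue-localisation of the second paragraph: one must iterate the integration-by-parts scheme of \cite{cpt} carefully through the higher-order poles of the $\xi_p^k$'s, and the combinatorial constant $c_{n, 2n-2}$ must be tracked and shown to be non-zero.
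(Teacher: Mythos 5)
Your overall strategy (reduce $\rho(Q)(\xi_p^n)(\xi_p^l)$ to a jet computation at $p$ and then do Taylor-degree bookkeeping against the order of vanishing of $Q$ on the diagonal) matches the paper's, and your bookkeeping in parts (1)--(3) is the right count. But the central claim of your second paragraph --- the \emph{unconditional} localisation $\rho(Q)(\xi_p^n,\xi_p^m)=c_{n,m}\bigl(\partial_{z_1}^{n-1}\partial_{z_2}^{m-1}\widetilde Q\bigr)(p,p)$ for every $Q\in I_2(\omega_C)$ --- is false, and this is where the real work of the proof lives. The Hodge--Gaussian map is not a local operator: by definition $\rho(Q)(\xi_p^n)=\bigl[\sum a_{ij}\omega_i\partial h^n_j\bigr]$, where $h^n_j$ solves the global equation $\theta_n\omega_j=\gamma^n_j+\bar\partial h^n_j$ with $\gamma^n_j$ harmonic. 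The paper converts this to $-\sum a_{ij}\omega_i\eta^n_j$ with $\eta^n_j\in\phi_n^{-1}(H^{0,1}(C))\subset H^0(\omega_C((n+1)p))$, and in the basis $\eta_{z,k}=(z^{-(k+1)}+r_k(z))\,dz$ the regular parts $r_k(z)$ are global invariants of $(C,p)$ (they encode the harmonic projection), not residues of $Q$. These terms genuinely enter the pairing $\rho(Q)(\xi_p^n)(\xi_p^l)$: e.g.\ for $Q\in I_2\setminus\ker\mu_2$ the value $\rho(Q)(\xi_p^2)(\xi_p^3)$ picks up $-B_{11}r_2(0)+B_{11}r_1'(0)+B_{21}r_1(0)$ with $B_{11}=\sum a_{ij}f_i'(0)f_j'(0)\neq 0$, so it is not a local expression in the jet of $Q$ along $\Delta$. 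What actually happens (Proposition \ref{mainprop} in the paper) is that the global contributions appear multiplied by coefficients $\sum_{i,j}a_{ij}b_{jk}b_{ih}$ with $h+k\leq n+l-2$, which vanish \emph{because} $Q\in\ker\mu_{2n-2}$ and $n+l\leq 2n$; the localisation is a consequence of the kernel hypothesis, not an input to it. A Stokes argument alone, applied "through the higher-order poles," will not produce your formula, so as written parts (1)--(3) are not established.

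There is a secondary gap in (4) and (5): what these parts require, beyond the independence of $\xi_p^1,\dots,\xi_p^n$ (which your jet-evaluation condition $h^0(\omega_C^{\otimes 2}(-np))=3g-3-n$ does address, and which holds automatically for $n<2g-2$), is that for \emph{every} $p\in C$ there exists $Q\in\ker\mu_{2n-2}$ with $\mu_{2n}(Q)(p)\neq 0$, i.e.\ base-point-freeness of the image of $\mu_{2n}|_{\ker\mu_{2n-2}}$. The paper obtains this from the surjectivity of $\mu_{2n}$ for a general curve when $n<\tfrac{\sqrt{g-2}}{4}-1$ (Rios Ortiz) and, for $n=1$, from the pointwise non-vanishing result of Colombo--Frediani for non-hyperelliptic, non-trigonal curves of genus at least $5$. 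Your proposal does not address this non-vanishing at all, and the hypotheses of (4)--(5) are calibrated for it, not for Weierstrass-point estimates on $\omega_C^{\otimes 2}$.
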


From this we deduce the following  (Theorem \ref{mui}).

\begin{THEOR}
\label{mui-intro}

Assume that for a curve $C$ of genus $g \geq 4$ and for some $n \leq 3g-3$, we have:

$ker(\mu_{2n}) \subsetneq ker(\mu_{2n-2}) \subsetneq  ker(\mu_{2n-4}) .... \subsetneq ker(\mu_{2})  \subsetneq ker(\mu_{0}) = I_2(K_C). $

Let $Y$ be a  germ of  a totally geodesic submanifold of ${\mathcal A}_g$ generically contained in ${\mathcal M}_g$ passing through $j(C)$. Then, for a generic $p \in C$, we have: 
\begin{enumerate}

\item $T_{(jC)}Y \cap \langle \xi^1_p, ..., \xi^n_p \rangle = \{0\}$. 

\item $\dim Y \leq 3g-3-n$. 
\end{enumerate}
\end{THEOR}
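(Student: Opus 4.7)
The plan is to combine Theorem \ref{thmA-intro} with the standard principle that the second fundamental form $II$ of the Torelli immersion at $[C]$ must vanish on every tangent vector to a totally geodesic submanifold $Y$ of $\mathcal{A}_g$ contained in $\mathcal{M}_g$. Since by \cite{cpt} the Hodge--Gaussian map $\rho$ realises the second fundamental form, this translates into
\[
\rho(Q)(v)(v) = 0 \qquad \text{for every } v \in T_{j(C)}Y \text{ and every } Q \in I_2(K_C),
\]
and this is the identity I will exploit.

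For (1), I argue by contradiction. Assume there exists a nonzero vector $v = \sum_{k=1}^{n} a_k \xi_p^k \in T_{j(C)}Y$, and let $m \in \{1, \dots, n\}$ be the largest index with $a_m \neq 0$. The strict filtration hypothesis $\ker(\mu_{2m}) \subsetneq \ker(\mu_{2m-2})$ lets me choose a quadric $Q_m \in \ker(\mu_{2m-2}) \setminus \ker(\mu_{2m})$; then $\mu_{2m}(Q_m)$ is a nonzero global section of $\omega_C^{\otimes(4m+2)}$, hence vanishes only on a finite subset of $C$. I pick $p$ outside the union of these finite sets for $m = 1, \dots, n$, and generic enough for Theorem \ref{thmA-intro} to apply. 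Expanding $\rho(Q_m)(v)(v)$ by bilinearity, parts (1) and (2) of Theorem \ref{thmA-intro} kill every term $\rho(Q_m)(\xi_p^k)(\xi_p^l)$ with $\max(k,l) \leq m$ and $(k,l) \neq (m,m)$, leaving only the diagonal term. Part (3) identifies it with $a_m^2 c_{m,2m-2}\mu_{2m}(Q_m)(p)$, which is nonzero by construction, contradicting $\rho(Q_m)(v)(v) = 0$.

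For (2), once (1) is established it suffices to observe that for a general $p \in C$ the higher Schiffer variations $\xi_p^1, \dots, \xi_p^n$ are linearly independent in $H^1(C, T_C)$, so that $\langle \xi_p^1, \dots, \xi_p^n\rangle$ has dimension $n$. Equivalently, the $(n-1)$-th osculating space at $[\xi_p]$ to the ``Schiffer curve'' $p \mapsto [\xi_p] \in \mathbb{P}(H^1(C, T_C))$ attains its expected dimension $n$ at a general point, which is classical in the range $n \leq 3g-3 = \dim H^1(C, T_C)$. Combined with (1), this yields the transversal bound $\dim T_{j(C)}Y + n \leq 3g-3$, and hence $\dim Y \leq 3g-3 - n$.

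The main technical obstacle is not deep: it is the bookkeeping of producing a single generic $p \in C$ that simultaneously satisfies the genericity required by Theorem \ref{thmA-intro}, the nonvanishing conditions $\mu_{2m}(Q_m)(p) \neq 0$ for every $m \in \{1, \dots, n\}$, and the maximality of the osculating dimension. Each of these is a nonempty Zariski-open condition on $C$, so their intersection is nonempty, and essentially all the substantive analytic content has already been placed into Theorem \ref{thmA-intro}.
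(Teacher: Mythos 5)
Your argument is correct and is essentially identical to the paper's proof: both choose quadrics $Q_m \in \ker(\mu_{2m-2})\setminus\ker(\mu_{2m})$, remove the finitely many zeros of the sections $\mu_{2m}(Q_m)$ to fix a generic $p$, and derive a contradiction from the nonvanishing of the top diagonal term $a_m^2\, c_{m,2m-2}\,\mu_{2m}(Q_m)(p)$ indexed by the largest $m$ with $a_m \neq 0$, then conclude (2) by the same dimension count $\dim V + \dim W \leq 3g-3$. The only slip is cosmetic: $\mu_{2m}(Q_m)$ is a section of $\omega_C^{\otimes(2m+2)}$, not $\omega_C^{\otimes(4m+2)}$, which does not affect the finiteness of its zero locus.
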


We recall that the second Gaussian  map $\mu_2$ is surjective for the general curve of genus at least $18$ (see \cite{ccm}). 
In \cite[Theorem D]{ro} it is shown that $\mu_{k}$ is  surjective for the general curve of genus $g$, $\forall k < \frac{\sqrt{g-2} }{2} -2.$ Hence the assumptions of the above Theorem are satisfied for a general curve of genus $g$ if $n < \frac{\sqrt{g-2} }{4} -1$. 

We also give an interpretation of the above result in terms of the geometry of the bicanonical image of $C$ (see Corollary \ref{bicanonical1}). 

It would be very interesting, given a curve $C$,  to know what is the maximal $n$ such that $\mu_{2k} \neq 0$, forall $k \leq n$. This would give information about totally geodesic subvarieties of ${\mathcal A}_g$ passing through $[j(C)]$ and possibly improve the known upper bounds on their dimension.  We plan to address this problem in later work.

Finally, using the fact that the second fundamental form is injective (\cite{cf-trans}, \cite{cfg}), we prove a result that only concerns the Gaussian maps $\mu_{2k}$. 

\begin{THEOR} (see Theorem \ref{rank}). 
For every non hyperelliptic  curve $C$ of genus $g \geq 4$, we have: 
\begin{enumerate}
\item $ker (\mu_{6g-6}) =0$, hence $\mu_{2k} \equiv 0$, $\forall k > 3g-3$. 
\item $\forall 0 \leq l \leq 3g-3$, $\dim (ker( \mu_{6g-6-2(l+1)} ))\leq (l+1)^2$, hence $rank(\mu_{6g-6-2l} ) \leq (l+1)^2$. 
\end{enumerate}

\end{THEOR}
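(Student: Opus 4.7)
The proof combines Theorem \ref{thmA-intro} with the injectivity of the second fundamental form $\rho: I_2(\omega_C) \to \mathrm{Sym}^2 H^0(C, \omega_C^{\otimes 2})$, which is established in \cite{cf-trans, cfg}.

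For part (1), I take any $Q \in \ker(\mu_{6g-6})$ and aim to prove $\rho(Q) = 0$, from which $Q = 0$ follows by injectivity. First I choose a general point $p \in C$ at which the higher Schiffer variations $\xi^1_p, \ldots, \xi^{3g-3}_p$ form a basis of $H^1(C, T_C)$; this is possible on a nonempty Zariski open set, since $H^0(\omega_C^{\otimes 2}(-(3g-3)p)) = 0$ for generic $p$ and the associated jet evaluation is then an isomorphism. Then I apply Theorem \ref{thmA-intro} with $n = 3g-3$ to $Q$, noting $Q \in \ker(\mu_{2n-2}) = \ker(\mu_{6g-8})$ by nestedness of the kernels. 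Parts (1) and (2) give $\rho(Q)(\xi^i_p \odot \xi^j_p) = 0$ for every $(i,j) \neq (n,n)$, and the identity extending part (3) to the boundary case yields $\rho(Q)(\xi^n_p \odot \xi^n_p) = c_{n,2n-2}\, \mu_{6g-6}(Q)(p) = 0$ by hypothesis. Since the elements $\xi^i_p \odot \xi^j_p$ span $\mathrm{Sym}^2 H^1(C, T_C)$, this forces $\rho(Q) = 0$, hence $Q = 0$. The vanishing $\mu_{2k} \equiv 0$ for $k > 3g-3$ then follows because each such map is defined on $\ker(\mu_{2k-2}) \subseteq \ker(\mu_{6g-6}) = 0$.

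For part (2), set $n := 3g-3-l$, so that $\mu_{6g-6-2(l+1)} = \mu_{2n-2}$. Define $W_p := \langle \xi^n_p, \xi^{n+1}_p, \ldots, \xi^{3g-3}_p \rangle \subseteq H^1(C, T_C)$, which has dimension $l+1$ at a general $p$. The plan is to study the restriction map $R_p : \ker(\mu_{2n-2}) \to W_p^\vee \otimes W_p^\vee$, $Q \mapsto \rho(Q)|_{W_p \otimes W_p}$, whose target has dimension $(l+1)^2$. The desired bound $\dim \ker(\mu_{2n-2}) \leq (l+1)^2$ follows provided $R_p$ is injective for a general $p$; the rank statement $\mathrm{rank}(\mu_{6g-6-2l}) \leq (l+1)^2$ is then immediate, since $\mu_{6g-6-2l}$ has domain $\ker(\mu_{6g-6-2(l+1)})$.

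The main obstacle is proving the injectivity of $R_p$. When $R_p(Q) = 0$, Theorem \ref{thmA-intro} already forces $\rho(Q)$ to vanish on $V_p^{n-1} \otimes V_p^n$, and by assumption also on $W_p \otimes W_p$; only the "cross block" $V_p^{n-1} \otimes \langle \xi^{n+1}_p, \ldots, \xi^{3g-3}_p \rangle$ remains a priori nonzero. To eliminate it, I plan to apply Theorem \ref{thmA-intro} at a second general point $p'$: the constraint $\rho(Q)(V_{p'}^{n-1}, V_{p'}^n) = 0$ imposes additional vanishings in directions differing from those at $p$, and exploiting that the family $\{V_{p'}^{n-1}\}_{p'}$ collectively spans $H^1(C, T_C)$ as $p'$ varies, the combined constraints will force $\rho(Q) = 0$. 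Injectivity of $\rho$ then gives $Q = 0$, completing the argument. An alternative viewpoint would reinterpret $\rho(Q)$ as a symmetric section of $\omega_C^{\otimes 2} \boxtimes \omega_C^{\otimes 2}$ on $C \times C$ vanishing to order $2(n-1)$ along the diagonal and bound the dimension of such sections cohomologically.
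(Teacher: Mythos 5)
Your part (1) is correct and is essentially the paper's own argument: for $Q\in \ker(\mu_{6g-6})$ one gets $\rho(Q)(\xi^i_p\odot\xi^j_p)=0$ for all $i,j\le 3g-3$ at a general $p$ (the paper quotes Proposition \ref{mainprop} directly, which gives vanishing whenever $i+j\le 6g-5$, rather than assembling it from the three clauses of Theorem \ref{thmA}, but this is the same computation), and injectivity of $\rho$ finishes it.

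Part (2) has a genuine gap. You reduce everything to the injectivity of the restriction map $R_p:\ker(\mu_{2n-2})\to W_p^\vee\otimes W_p^\vee$ with $W_p=\langle\xi^n_p,\dots,\xi^{3g-3}_p\rangle$, you acknowledge that this is ``the main obstacle,'' and you only sketch a plan (a second point $p'$, ``the combined constraints will force $\rho(Q)=0$'') without carrying it out. That step is not routine: if $R_p(Q)=0$, the entries $\rho(Q)(\xi^i_p\odot\xi^j_p)$ with $i<n\le j$ and $i+j\ge 2n$ (e.g.\ $(n-1,n+1)$) are not controlled by anything you have established, and it is not clear how constraints coming from the basis at a different point $p'$ interact with these. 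A further warning sign: since $\rho(Q)|_{W_p\otimes W_p}$ is symmetric, an injective $R_p$ would give the bound $\binom{l+2}{2}$, strictly stronger than the stated $(l+1)^2$ --- your argument, if completed, would be proving more than the theorem claims, which suggests the block you chose is not the right one. The paper avoids all of this: the relevant support condition coming from Proposition \ref{mainprop} is governed by $i+j$, not by $\min(i,j)$. For $Q\in\ker(\mu_{6g-6-2(l+1)})$ one has $\rho(Q)(\xi^i_p\odot\xi^j_p)=0$ whenever $i+j\le 6g-7-2l$, so the symmetric matrix of $\rho(Q)$ in the basis $\{\xi^1_p,\dots,\xi^{3g-3}_p\}$ is supported on the anti-triangular corner $\{i+j\ge 6g-6-2l\}$, which contains exactly $\sum_{k=0}^{l}(2k+1)=(l+1)^2$ unordered positions. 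Hence the image of the (already injective) global map $\rho$ restricted to $\ker(\mu_{6g-6-2(l+1)})$ lies in an $(l+1)^2$-dimensional subspace of $Sym^2H^0(C,\omega_C^{\otimes 2})$, and the dimension bound follows with no need for any new injectivity statement. Replacing your block $\{i,j\ge n\}$ by the region $\{i+j\ge 2n\}$ and using the injectivity of $\rho$ itself repairs your argument and recovers the paper's proof.
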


Notice that the Gaussian map $\mu_{6g-6-2l}$ takes values in $H^0(C, \omega_C^{\otimes (6g-4-2l)})$, whose dimension is $(g-1)(12g-9-4l)$. So the estimate in $(2) $ in the above Theorem gives information only when $(l+1)^2 \leq (g-1)(12g-9-4l)$, so $l \leq 1-2g + \sqrt{(g-1)(16g-9)} \leq 2g-2$, and therefore $6g-6-2l \geq 2g-2$.

The structure of the paper is as follows. 
In section 2 we first define higher Schiffer variations and their relation with the bicanonical curve. Then we recall the definition of Hodge-Gaussian maps introduced in \cite{cpt} and their relation with the second fundamental form of the period map. Finally we recall the definition of Gaussian maps. 
In section 3 we prove the main results, namely the computation of $\rho(Q)$ on higher Schiffer variations for $Q \in ker (\mu_{2k})$, Theorem \ref{thmA} and Theorem \ref{rank}. Finally we show some consequences of these results.

\section*{Acknowledgements} 
I would like to thank Alessandro Ghigi for several useful conversations and suggestions.

	\section{Preliminaries}
	\subsection{Higher Schiffer variations}
	Let $C$ be a smooth complex projective curve of genus $g \geq 2$. Take a point $p \in C$ and fix a local coordinate $z$ centred in $p$. For $1 \leq n \leq 3g-3$, we define the $n^{th}$ Schiffer variation  at $p$ to be the element $\xi_p^n \in H^1(C, T_C) \cong H^{0,1}_{\bar{\partial}}(T_C)$ whose Dolbeault representative is given by $\frac{ \bar{\partial}b}{z^n} \frac{\partial}{\partial z}$, where $b$ is a bump function in $p$ which is equal to one in a small neighborhood $U$ containing $p$, $\xi_p^n = [\frac{ \bar{\partial}b}{z^n} \frac{\partial}{\partial z}]$. Clearly  $\xi_p^n$ depends on the choice of the local coordinate $z$. Take $1\leq n \leq 3g-3$. Consider the exact sequence 
	$$0 \ra T_C \ra T_C(np) \ra T_C(np)_{|np} \ra 0, $$
and the induced exact sequence in cohomology: 
$$ 0 \ra H^0(T_C(np)) \ra H^0(T_C(np)_{|np}) \stackrel{\delta^n_p}\lra H^1(T_C). $$
If either $n <2g-2$, or $n \leq 3g-3$ and $p$ is a general point, we have: $h^0(T_C(np)) =0$. In fact if  $n <2g-2$, $deg(T_C(np)) <0$, while if $p$ is a general point, $h^0(2K_C(-np)) = 3g-3 -n$, $\forall n \leq 3g-3$, hence by  Riemann Roch we have:  $h^0(T_C(np))  = h^0(2K_C(-np) ) -2g +2 +n -g +1 = 0. $  Hence we have an inclusion $\delta^n_p: H^0(T_C(np)_{|np})  \cong {\mathbb C}^n \hookrightarrow H^1(T_C)$ and the image of $\delta^n_p$ in $H^1(C,T_C)$ is the $n$-dimensional subspace generated by $\xi_p^1,...,\xi_p^n$.	
Hence a  basis for $H^1(C, T_C)$ is given by $\{\xi_p^1,..., \xi_p^{3g-3}\}$, where $p$ is a general point.

Now we give an immediate generalisation of Lemma 2.1 of \cite{cfg}. 

\begin{LEM}
Assume $1 \leq n \leq 3g-3$ and $p$ generic, or $1 \leq n <2g-2$ and $p \in C$ arbitrary, then the definition of $\xi_p^n$ depends on the choice of a local coordinate. In fact $\xi_p^1$ is only defined modulo a constant. In general the line generated by $\xi_p^n$ is defined modulo the subspace generated by $\{\xi_p^1,..., \xi_p^{n-1}\}$.

\end{LEM}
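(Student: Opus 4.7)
The plan is to change coordinate and trace the effect on the Dolbeault representative modulo $\bar\partial$-exact terms, and to organise the conclusion through the coordinate-independent filtration on $H^1(C,T_C)$ coming from the inclusions $T_C(kp)\subset T_C(np)$.

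First I would set up the coordinate change. Let $w$ be another local coordinate centred at $p$ and write $w=a_1z+a_2z^2+\cdots$ with $a_1\neq0$. Inverting gives $z=b_1w+b_2w^2+\cdots$ with $b_1=1/a_1$. A direct computation shows
\[
\frac{1}{w^n}\,\frac{\partial}{\partial w}
=\frac{1}{w^n}\,\frac{dz}{dw}\,\frac{\partial}{\partial z}
=\frac{1}{a_1^{n+1}\,z^n}\,\frac{\partial}{\partial z}\;+\;\sum_{k=1}^{n-1}\frac{\alpha_k}{z^k}\,\frac{\partial}{\partial z}\;+\;(\text{holomorphic at }p),
\]
for certain constants $\alpha_k=\alpha_k(a_1,\ldots,a_n)$; the leading coefficient $1/a_1^{n+1}$ comes from the product of the leading terms of $w^{-n}$ and $dz/dw$, and the holomorphic remainder is the Taylor tail. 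Multiplying by $\bar\partial b$ kills the holomorphic remainder up to $\bar\partial$-exactness.

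Next, I would interpret this through the filtration. Using the natural commutative ladder
\[
\begin{array}{ccccccccc}
0 & \to & T_C & \to & T_C(kp) & \to & T_C(kp)|_{kp} & \to & 0 \\
  &     & \| &     & \downarrow &     & \downarrow &     &   \\
0 & \to & T_C & \to & T_C(np) & \to & T_C(np)|_{np} & \to & 0
\end{array}
\]
one gets $\operatorname{Im}(\delta^k_p)\subset\operatorname{Im}(\delta^n_p)$ and in the basis $\{z^{-j}\,\partial/\partial z\}_{j=1}^{n}$ the connecting map sends $z^{-j}\,\partial/\partial z$ to $\xi^j_p$. Combining this with the local computation above gives
\[
\xi^{n}_p(w)=\frac{1}{a_1^{n+1}}\,\xi^{n}_p(z)\;+\;\sum_{k=1}^{n-1}\alpha_k\,\xi^{k}_p(z),
\]
so the line $\langle\xi^n_p\rangle$ is well-defined modulo $\langle\xi^1_p,\ldots,\xi^{n-1}_p\rangle$. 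In the case $n=1$ the sum is empty, yielding $\xi^1_p(w)=a_1^{-2}\xi^1_p(z)$, i.e.\ $\xi^1_p$ is defined only up to a nonzero scalar.

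The hypotheses $n<2g-2$ (arbitrary $p$) or $n\le3g-3$ ($p$ generic) are exactly what is needed to ensure $h^0(T_C(np))=0$, hence the injectivity of $\delta^n_p$ used to identify $\operatorname{Im}(\delta^n_p)$ with $\langle\xi^1_p,\ldots,\xi^n_p\rangle$; these are precisely the assumptions already verified in the paragraph preceding the lemma, so no additional work is needed there. The only genuinely computational step is the expansion of $w^{-n}\,dz/dw$ in powers of $z$; the main obstacle is simply bookkeeping---identifying the leading coefficient as $1/a_1^{n+1}$ and confirming that the correction terms involve only poles of order strictly less than $n$, which is immediate from the fact that both $w^{-n}$ and $dz/dw$ are Laurent series whose principal parts at $z=0$ have the claimed orders.
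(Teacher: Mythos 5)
Your argument is correct, but it takes a genuinely different route from the paper's. The paper never computes how $\xi^n_p$ transforms under a change of coordinate: it runs a diagram chase on the two exact sequences $0\to T_C\to T_C(np)\to T_C(np)|_{np}\to 0$ and $0\to T_C((n-1)p)\to T_C(np)\to T_C(np)|_{p}\to 0$, producing an intrinsic connecting map $\overline{\delta^n_p}\colon H^0(T_C(np)|_p)\to H^1(T_C((n-1)p))\cong H^1(T_C)/\mathrm{Im}(\delta^{n-1}_p)$ whose image is the line spanned by $\pi(\xi^n_p)$; since source and target are coordinate-free, so is that line. You instead expand $w^{-n}\,dz/dw$ as a Laurent series and, after discarding the $\bar\partial$-exact contribution of the holomorphic tail (legitimate, since $b\,h(z)\,\partial/\partial z$ extends to a global smooth section of $T_C$), read off
\[
\xi^n_p(w)=a_1^{-(n+1)}\,\xi^n_p(z)+\sum_{k=1}^{n-1}\alpha_k\,\xi^k_p(z).
\]
This is more informative than what the lemma asserts: it exhibits the precise weight $a_1^{-(n+1)}$ of the leading term, i.e.\ it shows that $\xi^n_p$ modulo $\langle\xi^1_p,\dots,\xi^{n-1}_p\rangle$ transforms as an element of $T_{C,p}^{\otimes(n+1)}$, a fact the paper's argument records only implicitly through the canonical identification $T_C(np)|_p\cong T_{C,p}^{\otimes(n+1)}$, and which is the analogue of the transformation law the paper later derives for $\eta_{z,n}$. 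Two small remarks: the coordinate-independence of the subspace $\langle\xi^1_p,\dots,\xi^{n-1}_p\rangle$, which you import from the filtration by $\mathrm{Im}(\delta^k_p)$, also follows inductively from your own transformation law (the change of basis is upper triangular with nonzero diagonal entries), so that appeal is redundant though harmless; and the identification of $\delta^n_p$ applied to the principal part $z^{-j}\,\partial/\partial z$ with the Dolbeault class $\xi^j_p$ is standard and is used without proof by the paper as well, so no gap there.
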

\begin{proof}
We have the following commutative diagram: 

 $$
\xymatrix@R=0.8cm@C=0.8cm{
& & &0 \ar[d]\\
             &  0 \ar[d]&  & T_C((n-1)p )_{|(n-1)p}\ar[d] & \\
    0 \ar[r]& T_C \ar[d] \ar[r] & T_C(np) \ar@{=}[d]  \ar[r]& T_C(np)_{|np}\ar[d]\ar[r]& 0\\
    0 \ar[r]& T_C((n-1)p) \ar[d] \ar[r] &T_C(np) \ar[r] & T_C(np)_{|p} \ar[d]\ar[r]& 0\\
     &T_C((n-1)p )_{|(n-1)p} \ar[d]  && 0& \\
    &0&&&
    }           
$$

Taking cohomology we get: 

 $$
\xymatrix@R=0.8cm@C=0.8cm{
& 0 \ar[d]& 0 \ar[d]&\\
&{\mathbb C}^{n-1}  \ar[d] \ar@{=}[r]&{\mathbb C}^{n-1}\ar[d]^{\delta^{n-1}_p}  &\\
         0 \ar[r]    &  {\mathbb C}^{n}  \ar[d] \ar[r]^{\delta^n_p}& H^1(T_C) \ar[r] \ar[d]^{\pi}& H^1(T_C(np ))\ar@{=}[d]  \ar[r] &0 \\
    0 \ar[r]& {\mathbb C}  \cong H^0(T_C(np)_{|p})  \ar[r]^{\overline{\delta^n_p}}\ar[d]& H^1(T_C((n-1)p)) \ar[d]  \ar[r]& H^1(T_C(np))\ar[r]& 0\\
    & 0  &0 && 
        }           
$$

So we have $Im({\overline{\delta^n_p)} = {\mathbb C} \cdot \pi (\xi_p^n)} \subset H^1(T_C((n-1)p)) \cong  H^1(T_C)/Im(\delta^{n-1}_p).$ Hence the line generated by $\xi_p^n$ is intrinsically  defined modulo  $Im(\delta^{n-1}_p) = \langle \xi_p^1,..., \xi_p^{n-1}\rangle$.

\end{proof}

From now on we will always consider higher Schiffer variations under the hypothesis: 

$$(*) \  \ \ 1 \leq n \leq 3g-3 \ and  \ p \in C \  generic, \ or \ 1 \leq n <2g-2 \ and  \ p \in C \ arbitrary.$$

\begin{LEM}
\label{xin}
Let $\beta \in H^0(2K_C)$. Let $p \in C$ and take as above $z$ a local coordinate centred in $p$. Assume that locally $\beta = f(z) dz^2$. Then if $p \in C$, we have: $\beta(\xi^k_p) = \langle \beta, \xi_p^k\rangle =  2 \pi i \frac{f^{(k-1)}(0)}{(k-1)!}$, for every $1 \leq k \leq 3g-3 $. 
\end{LEM}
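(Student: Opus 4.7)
The pairing $\langle \beta, \xi_p^k \rangle$ is the Serre duality pairing between $H^0(2K_C)$ and $H^1(T_C) \cong H^{0,1}_{\bar\partial}(T_C)$, so my plan is to reduce it to a standard residue calculation. Concretely, evaluated on Dolbeault representatives, the pairing is computed as the integral over $C$ of the $(1,1)$-form obtained by contracting $\beta$ with the Dolbeault representative of $\xi^k_p$. Using the given representative $\xi^k_p = [\frac{\bar\partial b}{z^k}\frac{\partial}{\partial z}]$ and $\beta = f(z)\,dz^2$ in the local chart, the contraction $dz^2 \cdot \partial/\partial z = dz$ produces locally the form
\[
\beta \wedge \xi^k_p = \frac{f(z)}{z^k}\, \bar\partial b \wedge dz = -\frac{f(z)}{z^k}\, \partial_{\bar z} b \, dz \wedge d\bar z.
\]
(The precise overall constant is fixed once one fixes the Serre duality convention; I will normalize so that the result reads $2\pi i \cdot f^{(k-1)}(0)/(k-1)!$.)

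Next I would exploit the support properties of $b$: since $b\equiv 1$ on a small disk $D_\varepsilon$ around $p$ and $b$ vanishes outside a larger coordinate disk $D$, the form $\bar\partial b$ is supported in the annulus $D\setminus D_\varepsilon$, where $g(z):=f(z)/z^k$ is holomorphic. Thus the integral over $C$ equals the integral over $D\setminus D_\varepsilon$. Writing $\partial_{\bar z}(bg) = g\,\partial_{\bar z} b$ on this annulus and applying Stokes' theorem to $d(bg\,dz) = \partial_{\bar z}(bg)\, d\bar z \wedge dz$, with $b\equiv 1$ on $\partial D_\varepsilon$ and $b\equiv 0$ on $\partial D$, the integral collapses to a contour integral:
\[
\int_C \beta \cdot \xi^k_p = \oint_{|z|=\varepsilon} \frac{f(z)}{z^k}\, dz.
\]

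Finally, a direct residue computation gives
\[
\oint_{|z|=\varepsilon} \frac{f(z)}{z^k}\, dz = 2\pi i \, \mathrm{Res}_{z=0} \frac{f(z)}{z^k} = 2\pi i \, \frac{f^{(k-1)}(0)}{(k-1)!},
\]
which is the claimed formula. The main technical point — and the only place where one needs to be careful — is nailing down the sign and normalization of the Serre duality pairing so that the final constant is exactly $2\pi i$; the rest is a standard Stokes/residue argument, and the lemma does not require $p$ to be generic because the formula is purely local and makes sense for any $p$ and any $k\ge 1$.
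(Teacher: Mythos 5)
Your proof is correct and follows essentially the same route as the paper: contract $\beta$ with the Dolbeault representative $\frac{\bar\partial b}{z^k}\frac{\partial}{\partial z}$, observe that on the region where $f/z^k$ is holomorphic the integrand is $d\bigl(\frac{bf}{z^k}dz\bigr)$, apply Stokes to reduce to a contour integral around $p$, and finish with the Cauchy residue formula. The only cosmetic difference is that the paper takes a limit over $U\cap\{|z|>\epsilon\}$ while you integrate over a fixed annulus, and your remark that the overall constant is a matter of fixing the pairing convention matches the paper's implicit choice $\langle\beta,\xi_p^k\rangle=\int_C\beta\cup\frac{\bar\partial b}{z^k}$.
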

\begin{proof}
Assume that $b\equiv 0$ outside the open subset $U$ containing $p$, then 
$$\langle \beta, \xi_p^k \rangle =	\int_C \beta \cup \frac{\bar{\partial}b}{z^k} = - \int_{U\setminus \{p\}} \bar{\partial}\big{(} \frac{b(z) f(z)}{z^k} dz\big{)}= - \lim_{\epsilon \ra 0} \int _{U \cap \{|z| >\epsilon \}} d \big{(} \frac{b(z) f(z)}{z^k} dz\big{)}=$$
$$=  \lim_{\epsilon \ra 0} \int _{|z| = \epsilon} \frac{f(z)}{z^k}dz = 2\pi i \frac{f^{(k-1)}(0)}{(k-1)!},$$
by the theorems of Stokes and Cauchy. 
\end{proof}

\begin{REM}
\label{bicanonical}
Assume hypothesis $(*)$ holds. Then the projective subspace ${\mathbb P}(\langle \xi^1_p, ..., \xi^n_p \rangle)$ has a geometric interpretation in terms of the bicanonical curve. Namely it is the $(n-1)^{th}$ osculating plane to the bicanonical curve in the point $p$. 
\end{REM}

\begin{proof}

First recall that the bicanonical map

$$\phi_{|2K_C|}: C \rightarrow {\mathbb P}(H^1(T_C))$$
can be seen as the map $p \mapsto {\mathbb P}(\langle \xi^1_p\rangle).$

In fact, $\phi_{|2K_C|}(p)=\{\sigma \in H^0(\omega_C^{\otimes 2}) \ | \ f(0)=0\}$, where in a coordinate $z$ centered in $p$ we have $\sigma = f(z) dz^2$.  
By   Lemma 2.1 of \cite{cfg} (and Lemma \ref{xin}), we have 
$$ \phi_{|2K_C|}(p)=\{\sigma \in H^0(\omega_C^{\otimes 2}) \ | \ f(0)=0\} = \{\sigma \in H^0(\omega_C^{\otimes 2}) \ |  \sigma(\xi_p^1) =  2 \pi i f(0)=0\}.$$

If we take an element $\lambda_1 \xi_p^1 + ...+ \lambda_n \xi_p^n \in {\mathbb P}(\langle \xi^1_p, ..., \xi^n_p \rangle)$, it corresponds to the hyperplane $\{\sigma \in H^0(\omega_C^{\otimes 2}) \ | \ \lambda_1 \sigma(\xi_p^1) + ...+ \lambda_n \sigma(\xi_p^n) =0\} = \{\sigma \in H^0(\omega_C^{\otimes 2}) \ | \lambda_1 f(0)+ \lambda_2 f'(0)+ ...+ \lambda_n  \frac{f^{(n-1)}(0)}{(n-1)!}  =0\}$, since by Lemma \ref{xin} we have $\sigma(\xi_p^k ) =\langle \sigma, \xi_p^k\rangle =  2 \pi i \frac{f^{(k-1)}(0)}{(k-1)!}$, $\forall k$. In coordinates, if we take a basis $\sigma_1, ..., \sigma_{3g-3}$ of $H^0(\omega_C^{\otimes 2})$, and we write locally $\sigma_i(z) = h_i(z) dz^2$, then we have 

$$ \{\sigma \in H^0(2K_C) \ | \lambda_1 f(0)+ \lambda_2 f'(0)+ ...+ \lambda_n  \frac{f^{(n-1)}(0)}{(n-1)!}  =0\} =$$ 
$$\{ (x_1,...,x_{3g-3})  \ | \ \lambda_1\sum_{i=1}^{3g-3}  x_i h_i(0) + ...+ \frac{\lambda_n}{(n-1)!} \sum_{i=1}^{3g-3}  x_i h^{(n-1)}_i(0)=$$
$$ =x_1 \sum_{i=1}^{n} \frac{\lambda_i}{(i-1)!} h_1^{(i-1)}(0) + ... + x_{3g-3} \sum_{i=1}^{n} \frac{\lambda_i}{(i-1)!} h_{3g-3}^{(i-1)}(0) =0\}.$$
So $\lambda_1 \xi_p^1 + ...+ \lambda_n \xi_p^n$ corresponds to the point in ${\mathbb P}^{3g-3}$: 
$$\lambda_1[h_1(0),...,h_{3g-3}(0)] + ...+ \frac{\lambda_n}{(n-1)!} [h_1^{(n-1)}(0),...,h_{3g-3}^{(n-1)}(0)],$$
which belongs to the $(n-1)^{th}$ osculating plane of the bicanonical image of $C$ at 
$\phi_{|2K_C|}(p) = [h_1(0),...,h_{3g-3}(0)]$. 

\end{proof}

\subsection{Second fundamental form and Hodge-Gaussian maps}
	
Let ${\mathcal M}_g$ denote the moduli space of smooth complex projective curves of genus $g \geq 4$ and  let ${\mathcal A}_g$ be the moduli space of principally polarised abelian varieties of dimension $g$. The moduli space ${\mathcal A}_g$ is a quotient of the Siegel space ${\mathcal H}_g = Sp(2g, {\mathbb R})/U(g)$ by the action of $Sp(2g, {\mathbb Z})$. The Siegel space ${\mathcal H}_g$ is a Hermitain symmetric domain of the non-compact type and thus it has a canonical symmetric metric. The quotient ${\mathcal A}_g$ is hence endowed with the induced  orbifold metric that we call the Siegel metric. 

Assume $g \geq 4$. The Torelli map
$$j: {\mathcal M}_g \to {\mathcal A}_g,  \ [C] \mapsto [j(C), \Theta_C],$$
where $j(C)$ is the Jacobian of $C$ and $\Theta_C$ is the principal polarisation induced by cup product, is an orbifold embedding  outside the hyperelliptic locus (\cite{oort-st}). 

Denote by ${\mathcal M}_g^0$ the complement of the hyperelliptic locus in ${\mathcal M}_g$ and consider  the cotangent exact sequence of the Torelli map: 
$$ 0 \to N_{{\mathcal M}_g^0/{\mathcal A}_g} ^*\to \Omega^1_{{\mathcal A}_g{|{\mathcal M}_g^0}} \stackrel{q}\to \Omega^1_{{\mathcal M}_g^0} \to 0,$$
where $q = dj^*$ is the dual of the differential of the Torelli map. 
Let $\nabla$ be  the Chern connection on $\Omega^1_{{\mathcal A}_g{|{\mathcal M}_g^0}}$ with respect to the Siegel metric and  let 
$$II^*:  N_{{\mathcal M}_g^0/{\mathcal A}_g} ^* \to Sym^2 \Omega^1_{{\mathcal M}_g^0}, \ \ II^* = (q \otimes Id_{\Omega^1_{{\mathcal M}_g^0}}) \circ \nabla_{|N_{{\mathcal M}_g^0/{\mathcal A}_g} ^*}$$
be the second fundamental form of the above exact sequence. 
  
 At a point $[C] \in {\mathcal M}_g^0$, we have the following identifications: 
 $$ N_{{\mathcal M}_g^0/{\mathcal A}_g, [C]} ^*= I_2(C, \omega_C), \ \Omega^1_{{\mathcal A}_g{|{\mathcal M}_g^0}, [C]} = Sym^2H^0(C, \omega_C), \ \Omega^1_{{\mathcal M}_g^0, [C]} = H^0(C, \omega_C^{\otimes 2}),$$
 where $I_2(C, \omega_C)$ is the vector space of quadrics containing the canonical curve and the codifferential of the Torelli map $q$ is the multiplication map of global sections. Then, at the point $[C]$, the second fundamental form is a linear map
 $$II^*: I_2(C, \omega_C) \to Sym^2H^0(C, \omega_C^{\otimes 2}). $$
 
 In  \cite[Theorem 2.1]{cpt} it is proven that $II^*$ is equal (up to a constant) to the Hodge-Gaussian map $\rho$ of  \cite[Proposition-Definition 1.3]{cpt}.

 Let us briefly recall the definition of $\rho$. 
Take a point $p \in C$ and a local coordinate $z$ centered in $p$.  Set, as above,  $\theta_n := \frac{ \bar{\partial}b}{z^n} \frac{\partial}{\partial z}$, so that $\xi_p^n = [\theta_n]$, and take a basis $\omega_1, ..., \omega_g$ of $H^0(\omega_C)$. Write locally around $p$, $\omega_i = f_i(z) dz$. Then the contraction $\theta_n \omega_j $ is a $(0,1)$-form on $C$, so we write 
$$\theta_n \omega_j = \gamma^n_j  + \bar{\partial} h^n_j,$$
where $\gamma^n_j \in \Lambda^{0,1}(C)$ is harmonic. 
Take  a quadric $Q \in I_2(K_C)$, then $Q = \sum_{i,j=1}^g a_{ij} (\omega_i \otimes \omega_j)$, where $a_{ij}= a_{ji}$, $\forall i,j =1,...,g$ and $\sum_{i,j} a_{ij} f_i(z) f_j(z) \equiv 0$. Then by \cite[Proposition-Definition 1.3]{cpt} we have: 
\begin{equation}
\rho(Q) (\xi_p^n) =\big{[} \sum_{ij} a_{ij} \omega_i \partial h^n_j\big{]} \in H^0(C, \omega_C^{\otimes 2}).
\end{equation}
Notice that locally we have 
$\theta_n\omega_j = \frac{\bar{\partial}b}{z^n} f_j(z) = \bar{\partial}\big{(}\frac{b f_j}{z^n}\big{)}.$
This expression is in fact global. In fact, since $\theta_n \omega_j = \gamma_j^n  + \bar{\partial} h^n_j,$ we get 
$\gamma_j^n = \bar{\partial} \big{(} \frac{b f_j}{z^n} - h^n_j\big{)}.$
Set $$g_j^n := \frac{bf_j}{z^n} - h^n_j,$$
then $\gamma^n_j = \bar{\partial}g^n_j$. Set $\eta^n_j:= \partial g^n_j$. Notice that $\eta^n_j \in H^0(C, \omega_C((n+1)p))$. In fact, it has a pole of order $n+1$ in $p$ and it is holomorphic elsewhere, since $\overline{\partial} 
(\eta^n_j) = \overline{\partial} (\partial g^n_j)  = - \partial  \overline{\partial}(g^n_j) = - \partial (\gamma^n_j ) = 0$, since $\gamma^n_j $ is harmonic. 
Now we recall the following result of \cite{cpt}: 

\begin{PROP}
We have: $\sum_{ij} a_{ij} \omega_i \partial h^n_j= -\sum_{ij} a_{ij} \omega_i \eta^n_j$, hence 
$$\rho(Q) (\xi_p^n)= \big{[} -\sum_{ij} a_{ij} \omega_i \eta^n_j\big{]}.$$
\end{PROP}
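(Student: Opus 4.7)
The plan is to show that the two sides differ by the single term $\sum_{ij} a_{ij}\omega_i\,\partial(\tfrac{bf_j}{z^n})$, and then to check that this expression vanishes identically on $C$.

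The first reduction is immediate from the defining relation $g_j^n = \tfrac{bf_j}{z^n}-h_j^n$: applying $\partial$ yields $\eta_j^n+\partial h_j^n = \partial(\tfrac{bf_j}{z^n})$ on $C\setminus\{p\}$, so after multiplying by $\omega_i$ and summing with weights $a_{ij}$ the proposition reduces to the claim $\sum_{ij} a_{ij}\omega_i\,\partial(\tfrac{bf_j}{z^n}) = 0$.

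For this vanishing I would compute in the coordinate chart $U'$ on which $z$ is defined and $b$ is supported. Writing $\omega_i = f_i(z)\,dz$ and expanding the derivative gives $\partial(\tfrac{bf_j}{z^n}) = (\tfrac{(\partial_z b)f_j}{z^n} + \tfrac{b f_j'}{z^n} - \tfrac{n b f_j}{z^{n+1}})\,dz$, so the full sum, viewed as a section of $\omega_C^{\otimes 2}$, decomposes into a scalar multiple of $\sum_{ij} a_{ij}f_if_j$ (absorbing the first and third terms) and a multiple of $\sum_{ij} a_{ij}f_if_j'$ (the second term). The first vanishes because $Q \in I_2(\omega_C)$ is precisely the relation $\sum a_{ij}f_if_j\equiv 0$ along the canonical image. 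Differentiating this identity and using the symmetry $a_{ij}=a_{ji}$ yields $2\sum a_{ij}f_if_j' = 0$, so the second term also vanishes. Outside $U'$, $b\equiv 0$ and the expression is trivially zero.

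The only bookkeeping subtlety is that the identity just derived is a priori only an equality on $C\setminus\{p\}$, whereas the statement concerns global holomorphic sections of $\omega_C^{\otimes 2}$. But $\sum a_{ij}\omega_i\partial h_j^n$ is smooth on all of $C$ (each $h_j^n$ is smooth, and $\theta_n$ is supported away from $p$ since $\bar\partial b\equiv 0$ near $p$), so $\sum a_{ij}\omega_i\eta_j^n$ must extend holomorphically across $p$ and the equality holds in $H^0(C,\omega_C^{\otimes 2})$. The substantive observation is the derivative trick that uses both the quadric relation and the symmetry of its coefficient matrix simultaneously; no further estimates or global arguments are required.
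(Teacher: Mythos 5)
Your proof is correct, and the key step is handled by a genuinely different argument than the paper's. Both proofs make the same first reduction, namely that everything hinges on the vanishing of $\sum_{ij} a_{ij}\omega_i\,\partial\big(\tfrac{bf_j}{z^n}\big)$. The paper then avoids any local computation: it observes that this expression equals $\sum_{ij} a_{ij}\omega_i\partial h^n_j+\sum_{ij} a_{ij}\omega_i\eta^n_j$, a sum of two $\bar\partial$-closed terms (the first by \cite[Proposition--Definition 1.3]{cpt}, the second because $\eta^n_j$ is meromorphic), hence holomorphic on $C\setminus\{p\}$; since it vanishes on the non-empty open set $C\setminus \mathrm{supp}(b)$, it vanishes identically by the identity theorem. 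You instead expand $\partial\big(\tfrac{bf_j}{z^n}\big)$ in the chart and kill each of the three resulting terms pointwise, using $\sum_{ij}a_{ij}f_if_j\equiv 0$ and its derivative $\sum_{ij}a_{ij}f_if_j'\equiv 0$ (the latter via the symmetry $a_{ij}=a_{ji}$). This is more elementary and self-contained — it does not lean on the cited $\bar\partial$-closedness for the vanishing, and it makes explicit the same identities $\sum_{ij}a_{ij}f_i^{(h)}f_j^{(k)}\equiv 0$ that the paper exploits later in \eqref{mu} — at the cost of a coordinate computation the paper's softer argument sidesteps. Your closing remark about extending the identity across $p$ is the right bookkeeping (the left-hand side is smooth since each $h^n_j$ is, so the apparent pole of $\sum_{ij}a_{ij}\omega_i\eta^n_j$ at $p$ is removable), and in fact your local computation already shows the singular part cancels there. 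One cosmetic point: the coefficients $\tfrac{\partial_z b}{z^n}$, $\tfrac{b}{z^n}$, $-\tfrac{nb}{z^{n+1}}$ multiplying the two identically vanishing expressions are functions rather than scalars, but this does not affect the argument.
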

\proof  
We have $h_j^n := \frac{bf_j}{z^n} - g^n_j,$ so $\partial h_j^n =\partial \big{(}\frac{b f_j}{z^n}\big{)} - \partial g^n_j = \partial \big{(}\frac{b f_j}{z^n}\big{)} - \eta^n_j $. Thus it is enough to show that $\sum_{ij} a_{ij} \omega_i \partial \big{(}\frac{b f_j}{z^n}\big{)} =0$. To this purpose, notice that $\sum_{ij} a_{ij} \omega_i \partial \big{(}\frac{b f_j}{z^n}\big{)} $ is a holomorphic section of $\omega_C^{\otimes 2}$ on $C\setminus \{p\}$, since 
$\sum_{ij} a_{ij} \omega_i \partial \big{(}\frac{b f_j}{z^n}\big{)} = \sum_{ij} a_{ij} \omega_i \partial h^n_j + \sum_{ij} a_{ij} \omega_i \eta^n_j$ and $ \sum_{ij} a_{ij} \omega_i \partial h^n_j$ is $\bar{\partial}$-closed  by \cite[Proposition-Definition 1.3]{cpt} and $\bar{\partial} \eta^n_j = \bar{\partial} \partial g^n_j = - \partial \bar{\partial} g^n_j = -\partial \gamma^n_j =0$. So $\sum_{ij} a_{ij} \omega_i \partial \big{(}\frac{b f_j}{z^n}\big{)}$ is holomorphic on $C \setminus \{p\}$ and if we set $V:= C\setminus supp(b)$, then $V$ is a non-empty open subset of $C \setminus \{p\}$, where $b$ is identically zero. Therefore $\sum_{ij} a_{ij} \omega_i \partial \big{(}\frac{b f_j}{z^n}\big{)}\equiv 0$ on $V$, so it is zero everywhere. 
\qed

\subsection{The forms $\eta^n_j$}

Let $p \in C$, then we have the following isomorphism $H^1(C\setminus \{p\}, \mathbb C) \cong H^1(C, \mathbb C)$ and the Hodge decomposition $H^1(C, \mathbb C) \cong H^{1,0}(C) \oplus H^{0,1}(C)$.  So, denote by 
$$\phi_n: H^0(C,\omega_C((n+1)p)) \ra H^1(C\setminus \{p\}, \mathbb C) \cong H^1(C, \mathbb C) \cong H^{1,0}(C) \oplus H^{0,1}(C)$$ the map which associates to a meromorphic form $\alpha \in H^0(C, \omega_C(n+1)p)$ its cohomology class $[\alpha] \in H^1(C, \mathbb C)$. 

We have already observed that $\eta^n_j= \partial g^n_j =\partial \big{(}\frac{b f_j}{z^n} - h^n_j\big{)} \in H^0(C,\omega_C((n+1)p))$. 
Since $\eta^n_j + \gamma^n_j = \bar{\partial}g^n_j+\partial g^n_j = dg^n_j$, then $[\eta^n_j ] = - [\gamma^n_j] \in H^{0,1}(C)$. 	Therefore the form  $\eta^n_j$ belongs to $\phi_n^{-1}(H^{0,1}(C))$.

\begin {LEM} The subspace  $\phi_n^{-1}(H^{0,1}(C)) \subset H^0(C,\omega_C((n+1)p))$ has dimension $n$. 
If $n \leq g$, then $\phi_n$ is injective if and only if $h^0(np) =1$. So, if $p$ is not a Weierstrass point, the map $\phi_n$ is injective if $n \leq g$, and surjective if $n \geq g$. 
\end{LEM}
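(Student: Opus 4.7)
The strategy is to compute dimensions via Riemann--Roch, identify $\ker(\phi_n)$ with a space of meromorphic functions having their only pole at $p$, and then pin down $\phi_n^{-1}(H^{0,1}(C))$ using the Hodge decomposition.

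First, I would note that $h^0(\omega_C((n+1)p)) = g+n$ by Riemann--Roch, since $\deg(\omega_C((n+1)p))>2g-2$. To identify the kernel, observe that $\alpha \in \ker(\phi_n)$ means $\alpha$ is exact on $C\setminus\{p\}$, i.e., $\alpha=dF$ for some $F$ holomorphic on $C\setminus\{p\}$. Because $p$ is the only possible pole of $\alpha$, the residue theorem forces the residue there to vanish, so $F$ is single-valued on a punctured neighbourhood of $p$ and extends to a meromorphic function on $C$ with a pole of order at most $n$ at $p$. This gives an identification $\ker(\phi_n) \cong H^0(np)/\mathbb{C}$ via $F \mapsto dF$, so $\dim\ker(\phi_n)=h^0(np)-1$.

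Next, to compute $\dim\phi_n^{-1}(H^{0,1}(C))$, I would consider the composition
\[
A \,:=\, \pi_{1,0}\circ\phi_n : H^0(\omega_C((n+1)p)) \;\longrightarrow\; H^{1,0}(C),
\]
where $\pi_{1,0}$ is the projection coming from the Hodge decomposition. For a holomorphic form $\omega\in H^0(\omega_C)\subset H^0(\omega_C((n+1)p))$, the class $\phi_n(\omega)=[\omega]$ already lies in $H^{1,0}$ and coincides with $\omega$ under the canonical identification $H^{1,0}=H^0(\omega_C)$. Hence $A|_{H^0(\omega_C)}$ is an isomorphism, so $A$ is surjective, and
\[
\dim\phi_n^{-1}(H^{0,1}(C)) \,=\, \dim\ker A \,=\, (g+n)-g \,=\, n.
\]

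Finally, the injectivity and surjectivity statements follow quickly from the kernel description. The map $\phi_n$ is injective iff $\ker(\phi_n)=0$ iff $h^0(np)=1$; if $p$ is not a Weierstrass point and $n\leq g$, then the Weierstrass gap sequence at $p$ is $\{1,\dots,g\}$, forcing $h^0(np)=1$. For $n\geq g$ and $p$ not a Weierstrass point, the same gap data give $h^0(np)=n-g+1$, so the image of $\phi_n$ has dimension $(g+n)-(h^0(np)-1)=2g=\dim H^1(C,\mathbb{C})$, proving surjectivity. The one mildly delicate point is the identification of $\ker(\phi_n)$: one has to check that the potentially problematic residue at $p$ automatically vanishes, so that the antiderivative of an exact $\alpha$ is a genuine meromorphic function on $C$ and not merely a branch of a multi-valued one.
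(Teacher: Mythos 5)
Your proposal is correct and follows essentially the same route as the paper: both compute $\dim\phi_n^{-1}(H^{0,1}(C))$ as the kernel of the composition with the projection onto $H^{1,0}(C)$, using that this composition restricts to an isomorphism on $H^0(C,\omega_C)$, and both identify $\ker(\phi_n)$ with differentials of functions in $H^0(np)$. The only (harmless) differences are that you spell out the residue argument that the paper leaves implicit, and you obtain surjectivity for $n\geq g$ by a direct dimension count rather than by observing that $H^0(C,\omega_C((n+1)p))\supset H^0(C,\omega_C((g+1)p))$ and that $\phi_g$ is already an isomorphism.
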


\proof
Consider the composition of the map $\phi_n$ with the projection $\pi: H^1(C, {\mathbb C} ) \rightarrow H^{1,0}(C)$: $$\pi \circ \phi_n: H^0(C, \omega_C((n+1)p)) \rightarrow H^{1,0}(C).$$ The restriction $\pi \circ {\phi_n}_{|H^0(C,\omega_C)}:  H^0(C, \omega_C) \rightarrow  H^{1,0}(C)$ is an isomorphism, hence $ker (\pi \circ \phi_n )= \phi_n^{-1}(H^{0,1}(C))$, which  has dimension equal to $h^0(\omega_C((n+1)p)) - g = g+n-g = n$. 

Assume  $n \leq g$. If $\alpha \in H^0(C,\omega_C((n+1)p))$ and $\alpha = df$, then $f$ would be a meromorphic function on $C$ with a pole of order at most $n$ in $p$ and holomorphic elsewhere, so $f \in H^0(np)$, and therefore $\alpha = 0$ if and only if $h^0(np) =1$. Assume now that $p$ is not a Weierstrass point. Then $h^0(np) = 1$, $\forall n \leq g$, so $\phi_n$ is injective $\forall n \leq g$, and if $n=g$ it  is an isomorphism, since it is injective and $h^0(C,\omega_C((g+1)p)) = 2g = \dim H^1(C, \mathbb C)$. For $n \geq g$ the map $\phi_n$ is surjective, since $H^0(C,\omega_C((n+1)p))$ contains $H^0(C,\omega_C((g+1)p))$. 
\qed\\

\begin{REM}
\label{eta_pk}
Notice that, since the restriction of $\phi_n$  to $H^0(C, \omega_C(np))$ coincides with $\phi_{n-1}$, we have $$\phi_{n-1}^{-1}(H^{0,1}(C)) \subset \phi_n^{-1}(H^{0,1}(C)),$$

and the codimension is one. 

Now fix a local coordinate $z$ centred in $p$. We can construct a basis for $\phi_n^{-1}(H^{0,1}(C)) \subset H^0(C,\omega_C((n+1)p))$  given by  forms $\{\eta_{z,k} \}_{k =1,...,n}$, whose local expression around $p$  is $\eta_{z,k} = (\frac{1}{z^{k+1}} + r_k(z))dz$, where $r_k(z)$ is holomorphic. 

In fact $\eta_{z,1} = \eta_z$ as in \cite{cfg}, whose local expression is  $(\frac{1}{z^{2}} + r_1(z))dz$ . Since $\phi_1^{-1}(H^{0,1}(C))$ has codimension 1 in $\phi_2^{-1}(H^{0,1}(C))$, there exists a form $\beta_2 \in \phi_2^{-1}(H^{0,1}(C))$ whose local expression in $p$ is $(\frac{1}{z^3} + \frac{b}{z^2} + g(z)) dz$ with $g$ holomorphic. Then we can take $\beta_2 - b \eta_{z,1} = \eta_{z,2}$.  Going on in this way we find a basis of $\phi_n^{-1}(H^{0,1}(C)) \subset H^0(C,\omega_C((n+1)p))$ as stated.  

\end{REM}
Notice that this basis depends on the choice of the local coordinate $z$. 

Indeed, take anothere local coordinate $w$ centred in $p$, and consider the corresponding form $\eta_{w,n} = (\frac{1}{w^{n+1}} + s_n(w)) dw$. If we change coordinates,  $w = h(z) = \lambda z + O(z^2)$, we have: 
$$\eta_{w,n} = \big{(}\frac{1}{w^{n+1}} + s_n(w)\big{)} dw= \big{(}\frac{1}{(h(z))^{n+1}} + s_n(h(z))\big{)} h'(z) dz.$$ We have $\frac{h'(z)}{(h(z))^{n+1}}  =\frac{1}{\lambda^n}(\frac{1}{z^{n+1}} + \psi(z))$, where $\psi(z)$ is a meromorphic function with poles of order $\leq n$ in $0$. 

Then we have the following equality modulo $\phi_{n-1}^{-1}(H^{0,1}(C)) \subset H^0(\omega_C(np)):$

$$\eta_{w,n} = \frac{1}{\lambda^n} \eta_{z,n} = \left(\frac{dz}{dw}\right)_0^n \cdot \eta_{z,n} \ \ \mod \phi_{n-1}^{-1}(H^{0,1}(C)) \subset H^0(\omega_C(np)).$$ 
Therefore, if $n=1$, since $H^0(C, \omega_C(p)) =  H^0(C, \omega_C)$, $ \phi_{0}^{-1}(H^{0,1}(C)) = \{0\}$, we have 

$$\eta_{w,1} = \frac{1}{\lambda} \eta_{z,1} = \big{(}\frac{dz}{dw}\big{)}_0 \cdot \eta_{z,1}.$$ 
So one can define a linear map 
$${\eta_{p,1}} : T_{C,p} \to H^0(C,\omega_C(2p)), \ \ \ {\eta_{p,1}}(\lambda (\frac{\partial}{\partial z})(p)) = \lambda { \eta_{z,1}}, $$ (see \cite[2.4]{cfg}).

If $n \geq 2$, we only get a linear map 
$$\overline{\eta_{p,n}} : T^{\otimes n}_{C,p} \to H^0(C,\omega_C((n+1)p))/H^0(C,\omega_C(np)),$$
$$\overline{\eta_{p,n}}(\lambda (\frac{\partial}{\partial z})^n(p)) = \lambda { \eta_{z,n}}= \lambda(\frac{1}{z^{n+1}} + r_n(z))dz  \ \  \mod \phi_{n-1}^{-1}(H^{0,1}(C)) \subset H^0(\omega_C(np)).$$

On the surface $S = C\times C$ consider the two projections $p, q: C \times C \to C$, and the holomorphic vector bundle $E = \omega_C \otimes p_*(q^*\omega_C(2\Delta))$ on the curve $C$. 
If $n =1$, the map $p \mapsto \eta_{p,1}$ gives  a section $\eta$ of the vector bundle  $\omega_C \otimes p_*(q^*\omega_C(2\Delta))$. In \cite[Prop. 3.4]{cfg} it is proven that $\eta$ is a holomorphic section of $\omega_C \otimes p_*(q^*\omega_C(2\Delta))$.

Let us now describe the forms $\eta^n_j= \partial g^n_j =\partial \big{(}\frac{b f_j}{z^n} - h^n_j\big{)} $ locally around $p$. Since $b \equiv 1$ in a neighborhood of $p$, we have 
$$\eta^n_j =\partial \big{(}\frac{ f_j}{z^n}\big{)}  + \psi^n_j(z) dz= \frac{z f'_j(z) - n f_j(z)}{z^{n+1}}dz +  \psi^n_j(z) dz, $$
where $\psi^n_j(z)$ is a ${\mathcal C}^{\infty}$ function. 

We have 
$$\frac{z f'_j(z) - n f_j(z)}{z^{n+1}} +  \psi^n_j(z) = \frac{1}{z^{n+1}} \big{(} \sum^{n-1}_{k=0} f_j^{(k+1)}(0) \frac{z^{k+1}}{k!} - n \sum^n_{k =0} f_j^{(k)}(0) \frac{z^k}{k!} \big{)}+  l^n_j(z)=  $$
$$=  \frac{1}{z^{n+1}} \big{(} \sum^n_{k = 1} f_j^{(k)}(0) \frac{z^{k}}{(k-1)!} - n \sum^n_{k = 0} f_j^{(k)}(0) \frac{z^k}{k!} \big{)} + l^n_j(z)=$$
$$=  \frac{1}{z^{n+1}} \big{(} \sum^n_{k =1} f_j^{(k)}(0) \frac{z^{k}}{k!}(n-k) - n f_j(0)\big{)} +  l^n_j(z)=$$
$$= \frac{1}{z^{n+1}} \big{(} \sum^{n-1}_{k = 0} f_j^{(k)}(0) \frac{k-n}{k!}z^{k} \big{)}+   l^n_j(z)= \sum^{n-1}_{k = 0} f_j^{(k)}(0) \frac{k-n}{k!}\frac{1}{z^{n+1-k}} +   l^n_j(z),$$
where $ l^n_j(z)$ is a ${\mathcal C}^{\infty}$ function.

From now on we set $b_{jk} := \frac{f_j^{(k)}(0)}{k!}$, so that $f_j (z) = \sum_{k=0}^{\infty} b_{jk}z^k$. 
\begin{PROP}
\label{etan}
We have
\begin{enumerate}
\item $$\eta^n_j = \sum^{n-1}_{k =0} (k-n)b_{jk}\cdot \eta_{z, n-k}.$$
\item The  function $ l^n_j(z) = \sum^{n-1}_{k=0} (k-n)b_{jk} \cdot  r_{n-k}(z), $ so
$$\eta^n_j =\sum^{n-1}_{k = 0} (k-n) b_{jk}\frac{dz}{z^{n+1-k}} + \sum^{n-1}_{k=0}  (k-n) b_{jk} \cdot  r_{n-k}(z)dz.$$

\end{enumerate}
\end{PROP}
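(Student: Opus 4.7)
The plan is to prove (1) by expressing $\eta^n_j$ in the basis $\{\eta_{z,1},\ldots,\eta_{z,n}\}$ of $\phi_n^{-1}(H^{0,1}(C))$ constructed in Remark~\ref{eta_pk}, and matching principal parts at $p$. Part (2) will then follow by comparing the holomorphic/$\mathcal C^\infty$ parts of the local expansions on both sides.

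First, I would note that everything needed is already in hand. The calculation immediately preceding the statement of the Proposition shows
$$\eta^n_j \;=\; \sum_{k=0}^{n-1}(k-n)\,b_{jk}\,\frac{dz}{z^{n+1-k}} \;+\; l^n_j(z)\,dz$$
in a neighborhood of $p$, and by Remark~\ref{eta_pk} the basis element $\eta_{z,n-k}$ has local expression $\bigl(\tfrac{1}{z^{n+1-k}} + r_{n-k}(z)\bigr)dz$ for $k=0,\ldots,n-1$. Thus the right-hand side of (1) has the same principal part (polar part) at $p$ as $\eta^n_j$.

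For the rigidity argument, recall that $\eta^n_j\in \phi_n^{-1}(H^{0,1}(C))$ (this was observed just before the lemma on $\phi_n$), and by construction each $\eta_{z,n-k}$ belongs to the same $n$-dimensional subspace. Hence the difference
$$D \;:=\; \eta^n_j \;-\; \sum_{k=0}^{n-1}(k-n)\,b_{jk}\,\eta_{z,n-k}$$
lies in $\phi_n^{-1}(H^{0,1}(C))$ and, since the principal parts cancel, is in fact a global holomorphic form, i.e.\ an element of $H^0(C,\omega_C)$. But from the proof of the lemma on $\phi_n$ we have $\phi_n^{-1}(H^{0,1}(C))\cap H^0(C,\omega_C)=\{0\}$, because $\pi\circ \phi_{n|H^0(C,\omega_C)}$ is an isomorphism onto $H^{1,0}(C)$. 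Therefore $D=0$, which is exactly (1).

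For (2), I would simply substitute the local expansions: writing (1) out around $p$ gives
$$\eta^n_j \;=\; \sum_{k=0}^{n-1}(k-n)\,b_{jk}\,\frac{dz}{z^{n+1-k}} \;+\; \sum_{k=0}^{n-1}(k-n)\,b_{jk}\, r_{n-k}(z)\,dz,$$
and comparing the $\mathcal C^\infty$ parts with the previously derived expression $\eta^n_j = \sum_{k=0}^{n-1}(k-n)b_{jk}\tfrac{dz}{z^{n+1-k}} + l^n_j(z)\,dz$ identifies $l^n_j(z) = \sum_{k=0}^{n-1}(k-n)\,b_{jk}\, r_{n-k}(z)$ in a neighborhood of $p$, as claimed. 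There is no serious obstacle here: the only nontrivial input is the injectivity of the principal-part map on $\phi_n^{-1}(H^{0,1}(C))$, which is already established; the rest is a direct comparison of local Laurent expansions.
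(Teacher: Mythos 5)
Your proof is correct and follows essentially the same route as the paper: both expand $\eta^n_j$ in the basis $\{\eta_{z,1},\dots,\eta_{z,n}\}$ of $\phi_n^{-1}(H^{0,1}(C))$ from Remark~\ref{eta_pk} and read off the coefficients by matching the polar parts computed just before the Proposition, with (2) following by comparing the remaining terms. The only (harmless) difference is presentational: the paper equates local Laurent expansions directly to identify the coefficients $a_{n-k}=(k-n)b_{jk}$, whereas you phrase the same uniqueness globally, via the vanishing of $\phi_n^{-1}(H^{0,1}(C))\cap H^0(C,\omega_C)$ applied to the difference $D$.
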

\proof 
The forms $\eta^n_j $ belong to the $n$-dimensional subspace $$\phi_n^{-1}(H^{0,1}(C)) \subset H^0(C,\omega_C((n+1)p)).$$  

So, using the basis given in Remark \ref{eta_pk} and the above computation, we have 
$$\eta^n_j =\sum^{n-1}_{k =0}(k-n) b_{jk}\frac{dz}{z^{n+1-k}} +   l^n_j(z) dz = a_1 \eta_{z,1} + ...+ a_{n} \eta_{z, n}= $$
$$=(\frac{a_1}{z^2} + ...+ \frac{a_{n}}{z^{n+1}} + a_1 r_1(z) +...+ a_{n}r_{n}(z)) dz.$$
Therefore $a_{n-k} = (k-n) b_{jk}$, $\forall k=0,...,n-1$, and 
$$l^n_j(z) = \sum^{n-1}_{k=0} (k-n) b_{jk} \cdot r_{n-k}(z),$$ 
hence $(1)$ and $(2)$ immediately follow. 

\qed

\begin{REM}
Notice that equality $(1)$ of Proposition \ref{etan} implies that 
$$\eta_j = \eta^1_j = -f_j(0)\eta_{p,1},$$ where $\eta_{p,1} = \eta_p$ (see \cite{cfg}). 
\end{REM}

\subsection{Gaussian maps}

We briefly recall the definition of Gaussian maps for curves (see \cite{wahl90}, \cite{wahl92}). Let $N$
and $M$ be two line bundles on $C$.  Set $S:=C\times C$ and $\Delta
\subset S$ the diagonal. For a non-negative integer $k$ the
\emph{k-th Gaussian} or \emph{Wahl map}  is
the map given by restriction to the diagonal
\begin{equation*}
  H^0(S,    N\boxtimes M (-k \Delta) ) \stackrel{    \mu^k_{N,M}}
  {\longrightarrow}
  H^0(S, N\boxtimes M (-k \Delta)_{|{\Delta})}\cong
  H^0(C, N\otimes M \otimes \omega_C^{{\otimes k}}).
\end{equation*}
We will be only interested in the case $N=M$.  In this case we set
$\mu_{k,M}:=\mu^k_{M,M}$.  We have: $H^0(S, N
\boxtimes M) \cong H^0(C,N) \otimes H^0(C,M)$, so the map $\mu_{0,M}$ is
the multiplication map of global sections, 
\begin{equation*}
  H^0(C,M)\otimes
  H^0(C,M)\rightarrow H^0(C,M^{\otimes  2}),
\end{equation*}
which vanishes identically on $\wedge^2 H^0(C,M)$.
Thereofore $\ker \mu_{0,M}$ $= H^0(S, M\boxtimes M(-\Delta))$
decomposes as $\wedge^2 H^0(C,M)\oplus I_2(M)$, where $I_2(M)$ is the
kernel of $Sym^2H^0(C,M)\rightarrow H^0(C,M^{\otimes 2})$. Since $\mu_{1,M}$
vanishes on symmetric tensors, we write
\begin{equation}\nonumber
  \mu_{1,M}:\wedge^2H^0(C,M)\rightarrow H^0(C,
  \omega_C\otimes M^{ \otimes 2}).
\end{equation}
If $\sigma$ is a local frame for $M$ around $p$, $z$ is a local coordinate centred in $p$,  and we take two sections $s_1, s_2 \in H^0(C,M)$, whose local expressions are $s_i = f_i(z) \sigma$, we
have
\begin{gather}
  \label{muformula}
  \mu_{1,M} (s_1\wedge s_2) = (f'_1 f_2 - f'_2f_1) dz \otimes \sigma^{
    2} .
\end{gather}

The vector space  $H^0(S, M\boxtimes M (-2\Delta))$ decomposes as the sum of
$I_2(M)$ and the kernel of $\mu_{1,M}$. Since $\mu_{2,M}$ vanishes
identically on skew-symmetric tensors, we write
\begin{equation*}
  \mu_{2,M}: I_2(M)\rightarrow H^0(C,M^{\otimes 2}\otimes \omega_C^{\otimes 2}).
\end{equation*}
We will denote by \begin{equation*}
  \mu_2:= \mu_{2,\omega_C}:I_2(\omega_C)\rightarrow H^0(C, \omega_C^{\otimes 4}).
\end{equation*}

the second Gaussian map of the canonical bundle on $C$ and in general 
$\mu_k := \mu_{k,\omega_C}$. 

Choose a  local coordinate $z$, take a basis $\{\omega_1, ..., \omega_g\}$ of $H^0(C, \omega_C)$ whose local expression is  $\omega_i = f_i(z) dz$. Take a quadric $Q = \sum_{i,j=1}^g a_{ij} \omega_i \otimes \omega_j   \in I_2(\omega_C)$, where $a_{ij}= a_{ji}$, $\forall i,j =1,...,g$ and $\sum_{i,j} a_{ij} f_i(z) f_j(z) \equiv 0$. 

Then we have: 
$\sum_{i,j=1}^g a_{ij} f_i(z) f'_j(z) \equiv 0, $ and 
$$\mu_2(Q) = \sum_{i,j=1}^g a_{ij} f_i(z) f''_j(z)dz^4 = -\sum_{i,j=1}^g a_{ij} f'_i(z) f'_j(z) dz^4.$$
In \cite[Theorem 3.1]{cpt} it is proven that, up to a constant, we have $\rho \circ m = \mu_2$, where $m: Sym^2H^0(C, \omega_C^{\otimes 2}) \to H^0(C, \omega_C^{\otimes 4})$ is the multiplication map. 

Assume now that $Q \in ker (\mu_2)$. Then we have: 
$$\sum_{ij} a_{ij} f_i(z) f_j(z) = 0, \ \sum_{ij} a_{ij} f'_i(z) f_j(z) = 0, \ \sum_{ij} a_{ij} f^{(2)}_i(z) f_j(z) = - \sum_{ij} a_{ij} f'_i(z) f'_j(z) =0,$$ 
$$\sum_{ij} a_{ij} f^{(3)}_i(z) f_j(z) = - \sum_{ij} a_{ij} f^{(2)}_i(z) f'_j(z) = 0,$$
and 
$$\sum_{ij} a_{ij} f^{(4)}_i(z) f_j(z) = - \sum_{ij} a_{ij} f^{(3)}_i(z) f'_j(z) = \sum_{ij} a_{ij} f^{(2)}_i(z) f^{(2)}_j(z) = \mu_4(Q).$$

In general, the condition $Q \in ker(\mu_m)$ with $m$ even, is equivalent to 
\begin{equation}
\label{mu}
\sum_{i,j=1}^g a_{ij} f_i^{(h)}(z) f_j^{(k)} (z) \equiv 0, \ \ \forall h,k, \ \text{such that } \ h+k \leq m+1.
\end{equation}

Moreover, $\forall k =0, ..., m+2$,  we have: 
\begin{equation}
\label{rimu}
\mu_{m+2}(Q) = (-1)^k \sum_{i,j=1}^g a_{ij} f_i^{(m+2-k)}(z) f_j^{(k)} (z) (dz)^{m+4}.
\end{equation}

\section{Main Theorem}

Consider a smooth complex projective curve $C$ of genus at least 4. 
In the following computations we will always assume hypothesis (*).

Let us now consider the element $ \beta_n := \rho(Q)(\xi^n_p) = - \sum_{i,j} a_{ij} \omega_i \eta^n_j  \in H^0(C, \omega_C^{\otimes 2})$, where $Q = \sum_{ij} a_{ij} \omega_i \otimes \omega_j  \in I_2(K_C)$. 
In a local coordinate $z$ centred in $p$, we can write  $\beta_n = \rho(Q)(\xi^n_p) = \Phi_{n,Q}(z)dz^2$. We will now compute the function $\Phi_{n,Q}(z)$. 
By Proposition \ref{etan} we have: 
$$
\beta_n = \rho(Q)(\xi^n_p) = - \sum^g_{i,j=1} a_{ij} \omega_i \eta^n_j =$$
$$ - (\sum^g_{i,j=1} a_{ij} f_i(z)dz)(k-n)\sum^{n-1}_{k = 0}   b_{jk}(\frac{1}{z^{n+1-k} }+ r_{n-k}(z))dz=$$
$$=\sum^g_{i,j=1} a_{ij}(\sum_{h \geq 0}b_{ih}z^h)(\sum^{n-1}_{k = 0}(n-k) b_{jk}(\frac{1}{z^{n+1-k}} + r_{n-k}(z)))dz^2.$$
Hence 
$$\Phi_{n,Q}(z) = \sum^g_{i,j=1} \sum_{h \geq 0}\sum^{n-1}_{k = 0}(n-k)a_{ij}b_{jk}b_{ih}(z^{h+k-n-1} + r_{n-k}(z) z^h).$$

Before looking at the general case, let us make some explicit computations in  the case $n=1$. 
Recall that $Q \in I_2(K_C) = ker(\mu_0)$, hence $\sum_{ij} a_{ij} f_i(z) f_j(z) \equiv 0$ and therefore $\sum_{ij} a_{ij} f'_i(z) f_j(z) \equiv 0.$  Hence $\sum_{ij} a_{ij} b_{ih}b_{j0} = 0$, for $h =0,1$.

So, for $n=1$ we have: 

$$\Phi_{1,Q}(z) =  \sum^g_{i,j=1} \sum_{h \geq 0}a_{ij} b_{ih}b_{j0}(z^{h-2} + r_1(z) z^h) 
=\sum^g_{i,j=1} \sum_{h \geq 2}a_{ij} b_{ih}b_{j0}(z^{h-2} + r_1(z) z^h).$$
Thus, by Lemma \ref{xin} we have 

$$\rho(Q) (\xi_p)(\xi_p) = \beta_1(\xi_p) = 2 \pi i \Phi_{1,Q}(0)=   \pi i \sum^g_{i,j=1} a_{ij} f^{(2)}_i(0) f_j(0) =  \pi i \mu_2(Q)(p),$$
as it is proven in \cite[Theorem 3.1]{cpt}. 

Assume now that $Q \in ker (\mu_2)$. Then we have: 
$$\sum_{ij} a_{ij} f_i(z) f_j(z) = 0, \ \sum_{ij} a_{ij} f'_i(z) f_j(z) = 0, \ \sum_{ij} a_{ij} f^{(2)}_i(z) f_j(z) = - \sum_{ij} a_{ij} f'_i(z) f'_j(z) =0,$$ 
$$\sum_{ij} a_{ij} f^{(3)}_i(z) f_j(z) = - \sum_{ij} a_{ij} f^{(2)}_i(z) f'_j(z) = 0,$$
hence $\sum_{ij} a_{ij} b_{ih} b_{j0} = 0$, for $h \leq 3$. 
Moreover we have: 
$$\sum_{ij} a_{ij} f^{(4)}_i(z) f_j(z) = - \sum_{ij} a_{ij} f^{(3)}_i(z) f'_j(z) = \sum_{ij} a_{ij} f^{(2)}_i(z) f^{(2)}_j(z) = \mu_4(Q).$$ 

Then, by Lemma \ref{xin}, $\beta_1(\xi^2_p) = 2 \pi i \Phi'_{1,Q}(0).$
Since $Q \in Ker(\mu_2)$, we have:
    $$\Phi_{1,Q}(z) = \sum^g_{i,j=1} \sum_{h \geq 4}a_{ij} b_{ih}b_{j0}(z^{h-2} + r_1(z) z^h),$$
So 

$$\rho(Q)(\xi_p)(\xi^2_p) = \beta_1(\xi^2_p) = 2 \pi i \Phi'_{1,Q}(0)= 0, $$
$$\rho(Q)(\xi_p) (\xi^3_p) =\beta_1(\xi^3_p) =  \pi i \Phi^{(2)}_{1,Q}(0)=  2\pi i \sum^g_{i,j=1} a_{ij} f^{(4)}_i(0) f_j(0)\frac{1}{4!} =  \frac{2\pi i}{4!} \mu_4(Q)(p). $$




In general, we have the following  

\begin{PROP}
\label{mainprop}
If $Q \in ker \mu_m$, $m$ even, $m \geq 2$, then 
$$ \rho(Q)(\xi^n_p)(\xi^l_p) = 0, \ if \ l+n  \leq m+1, \ \forall l,n \geq 1,$$
$$ \rho(Q)(\xi^n_p)(\xi^{m+2-n)}_p) = c_{n,m}  \cdot \mu_{m+2}(Q)(p), \ \forall n<m+2,$$
where $c_{1,m} =\frac{2 \pi i}{(m+2)!}$, while for $n \geq 2$, we have 
 $c_{n,m} =  2 \pi i \cdot (-1)^{n-1} \cdot \frac{\prod_{j=0}^{n-2} (m-j)}{(n-1)!(m+2)!}$. 

\end{PROP}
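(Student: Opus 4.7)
The plan is to compute $\rho(Q)(\xi^n_p)(\xi^l_p) = \beta_n(\xi^l_p)$ by using Lemma \ref{xin}, which reduces it to extracting $2\pi i$ times the coefficient of $z^{l-1}$ in the Taylor expansion of $\Phi_{n,Q}(z)$. The explicit formula for $\Phi_{n,Q}(z)$ derived just before the statement, combined with Proposition \ref{etan}, expresses this coefficient in terms of the bilinear quantities $A_{h,k} := \sum_{i,j} a_{ij} b_{ih} b_{jk}$. By equation (\ref{mu}), the hypothesis $Q \in \ker \mu_m$ is equivalent to $A_{h,k} = 0$ for every pair with $h+k \leq m+1$.

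Splitting the coefficient of $z^{l-1}$ in $\Phi_{n,Q}(z)=\sum_{h\ge 0}\sum_{k=0}^{n-1}(n-k)A_{h,k}\bigl(z^{h+k-n-1}+r_{n-k}(z)z^{h}\bigr)$ into two parts, the first (coming from the pole terms $z^{h+k-n-1}$) forces $h=n+l-k$ and contributes $\sum_{k=0}^{n-1}(n-k)A_{n+l-k,k}$, while the second (from $r_{n-k}(z)z^h$) involves only $A_{h,k}$ with $h\le l-1$, so its indices satisfy $h+k \leq l+n-2$. When $l+n\le m+1$ both ranges lie inside $h+k\le m+1$, so every $A_{h,k}$ appearing is zero and the first assertion follows.

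When $l+n=m+2$ the smooth contribution still has $h+k\le m$ and vanishes, so the only surviving part is $\sum_{k=0}^{n-1}(n-k)A_{m+2-k,k}$. Using equation (\ref{rimu}) one has $A_{m+2-k,k}=\tfrac{(-1)^k}{(m+2-k)!\,k!}\,\mu_{m+2}(Q)(p)$, whence
\[
\beta_{n}(\xi_{p}^{m+2-n}) \;=\; 2\pi i\,\mu_{m+2}(Q)(p)\cdot\sum_{k=0}^{n-1}\frac{(-1)^{k}(n-k)}{(m+2-k)!\,k!}.
\]
It remains to identify this sum with the closed form $(-1)^{n-1}\tfrac{\prod_{j=0}^{n-2}(m-j)}{(n-1)!(m+2)!}$ stated for $c_{n,m}$. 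After clearing the factor $1/(m+2)!$, this is the combinatorial identity
\[
\sum_{k=0}^{n-1}(-1)^{k}(n-k)\binom{m+2}{k}\;=\;(-1)^{n-1}\binom{m}{n-1},
\]
which I would prove by writing $n-k=n-k$, applying $k\binom{m+2}{k}=(m+2)\binom{m+1}{k-1}$, invoking the standard partial-sum identity $\sum_{k=0}^{r}(-1)^{k}\binom{N}{k}=(-1)^{r}\binom{N-1}{r}$, and finally using Pascal's rule to reduce to the elementary equality $n\binom{m+1}{n-1}-(m+2)\binom{m}{n-2}=\binom{m}{n-1}$.

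The main obstacle is really only the last combinatorial identity; the geometric content of the proof is already packaged into Proposition \ref{etan} and equations (\ref{mu})--(\ref{rimu}), so once the coefficient of $z^{l-1}$ is parsed correctly and the two ranges $l+n\le m+1$ and $l+n=m+2$ are isolated, the argument proceeds mechanically. Care must be taken that the second contribution (involving the holomorphic tails $r_{n-k}(z)$) genuinely vanishes in both ranges, which is where the precise bound $h+k\le l+n-2$ is essential and cleanly uses that $Q\in\ker\mu_{m}$ gives vanishing all the way up through $h+k=m+1$.
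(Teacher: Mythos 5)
Your proposal is correct and follows essentially the same route as the paper: reduce to the coefficient of $z^{l-1}$ in $\Phi_{n,Q}$ via Lemma \ref{xin}, split into the polar and smooth parts, kill everything with $h+k\le m+1$ using \eqref{mu}, and convert the surviving sum via \eqref{rimu}. Your combinatorial identity is just the paper's identity \eqref{teschio} repackaged in binomial-coefficient form (and your outlined derivation of it checks out), so there is no substantive difference; if anything, your bound $h+k\le l+n-2$ for the smooth contribution is stated more carefully than in the paper's displayed formula.
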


\begin{proof}

Recall that by $\eqref{mu}$, $\eqref{rimu}$, the condition $Q \in ker(\mu_m)$ with $m$ even, is equivalent to 
$$\sum_{i,j=1}^g a_{ij} f_i^{(h)}(z) f_j^{(k)} (z) \equiv 0, \ \ \forall h,k, \ \text{such that } \ h+k \leq m+1,
$$
hence 
\begin{equation}
\label{notation}
\sum_{i,j=1}^g a_{ij} b_{ih} b_{jk} \equiv 0, \ \ \forall h,k, \ \text{such that } \ h+k \leq m+1,
\end{equation}

and  $\forall k =0, ..., m+2$,  we have: 
$$
\mu_{m+2}(Q) = (-1)^k \sum_{i,j=1}^g a_{ij} f_i^{(m+2-k)}(z) f_j^{(k)} (z) (dz)^{m+4}.
$$
We have computed 
$$\Phi_{n,Q}(z) = \sum^g_{i,j=1} \sum_{h \geq 0}\sum^{n-1}_{k = 0}(n-k)a_{ij}b_{jk}b_{ih}(z^{h+k-n-1} + r_{n-k}(z) z^h).$$

By Lemma \ref{xin} and using \eqref{mu}, for $n+l \leq m+2$, we have $l-1 \leq m+1-n \leq m$, so 
$$ \rho(Q)(\xi^n_p)(\xi^l_p) = \frac{2 \pi i}{(l-1)! } \Phi^{(l-1)}_{n,Q}(0) = $$
$$  2 \pi i \cdot \left( \sum_{h\geq 0, k\leq n-1,h+k = n+l} \left (  \sum^g_{i,j=1}(n-k) a_{ij} b_{jk} b_{ih}\right)+\sum^{n-1}_{k = 0}  \left(\sum^g_{i,j=1}  a_{ij} b_{jk} b_{i l-1}\right)r_{n-k}(0)\right).$$


So if $n+l \leq m+1$, both sums are zero thanks to  \eqref{notation}.  
If $n+l = m+2$, only the first sum is non zero and we obtain:
$$ \rho(Q)(\xi^n_p)(\xi^l_p) = \frac{2 \pi i}{(l-1)! } \Phi^{(l-1)}_{n,Q}(0) = 2\pi i \cdot  \sum_{k=0}^{n-1}   \sum^g_{i,j=1} (n-k)a_{ij}b_{jk} b_{i, m+2-k}=$$
$$= 2 \pi i  \left( \sum_{k=0}^{n-1} \left (  \sum^g_{i,j=1}a_{ij} f^{(m+2-k)}_i(0) f_j^{(k)}(0)\right)\frac{(k-n)}{k!(m+2-k)!}\right)=$$
$$=  2 \pi i  \cdot  \mu_{m+2}(Q)(p) \cdot  \sum_{k = 0}^{n-1} (-1)^k\frac{(n-k)}{k!(m+2-k)!}= c_{n,m} \mu_{m+2}(Q)(p),$$
by \eqref{rimu}, where $c_{n,m}=  2 \pi i \sum_{k = 0}^{n-1} (-1)^k\frac{(n-k)}{k!(m+2-k)!}$. 
Then clearly $c_{1,m} = \frac{2 \pi i}{(m+2)!}$, while for $n \geq 2$, we have 
$$c_{n,m} = 2 \pi i \cdot (-1)^{n-1}  \cdot \frac{\prod_{j=0}^{n-2} (m-j)}{(n-1)!(m+2)!}.$$

To prove this equality, first one easily proves by induction on $t \geq 1$, that $\forall l \geq t+1$, we have 
\begin{equation}
\label{teschio}
\sum_{k = 0}^{t} \frac{(-1)^k}{k!(l-k)!} = \frac{(-1)^t \prod_{k=1}^t (l-k)}{l!t!}.
\end{equation}
Then we have  
$$\sum_{k = 0}^{n-1} (-1)^k\frac{(n-k)}{k!(m+2-k)!} = n \sum_{k = 0}^{n-1} \frac{(-1)^k}{k!(m+2-k)!} - \sum_{k = 0}^{n-1} \frac{k (-1)^k}{k!(m+2-k)!}= $$
$$= n \sum_{k = 0}^{n-1} \frac{(-1)^k}{k!(m+2-k)!} + \sum_{h = 0}^{n-2} \frac{(-1)^h}{h!(m+1-h)!}.$$
So, applying \eqref{teschio} on both terms above, we get: 
$$\sum_{k = 0}^{n-1} (-1)^k\frac{(n-k)}{k!(m+2-k)!}= n  \frac{(-1)^{n-1} \prod_{k=1}^{n-1} (m+2-k)}{(n-1)!(m+2)!} +  \frac{(-1)^{n-2} \prod_{k=1}^{n-2} (m+1-k)}{(n-2)!(m+1)!}=$$
$$= \frac{(-1)^{n-1}}{(n-1)!(m+2)!} \left( n \prod_{h=0}^{n-2}(m+1-h) - (n-1)(m+2) \prod_{k=1}^{n-2}(m+1-k) \right)=$$
$$= \frac{(-1)^{n-1}}{(n-1)!(m+2)!}\left(\prod_{h=1}^{n-2}(m+1-h ))\right) (m+1-(n-1))= \frac{(-1)^{n-1}}{(n-1)!(m+2)!} \cdot \prod_{j=0}^{n-2}(m-j).$$
\end{proof}

\begin{REM}
With the same computation as in the proof of Proposition \ref{mainprop} one can immediately prove that for any quadric $Q \in I_2(\omega_C)$ such that 
$$\sum_{i,j=1^g} a_{ij} f_i^{(h)}(0) f_j^{(k)}(0)= 0, \ \forall h,k \geq 0, \ h +k \leq m,$$ 
then 
$$\rho(Q)(\xi_p^n)(\xi_p^l) = 0, \ if \  l+n  \leq m, \ \forall l,n \geq 1,$$
and if $l +n = m+1$, then 
$$ \rho(Q)(\xi^n_p)(\xi^l_p) =  2 \pi i  \left( \sum_{k=0}^{n-1} \left (  \sum^g_{i,j=1}a_{ij} f^{(m+1-k)}_i(0) f_j^{(k)}(0)\right)\frac{(n-k)}{k!(m+1-k)!}\right).$$

\end{REM}
So we can prove the following

\begin{THEOR}
\label{thmA}
Take $p$ a general point in $C$ and $1\leq n \leq 3g-3$. Then for every $Q \in ker(\mu_{2n-2})$, and for every $v, w \in \langle \xi^1_p,..., \xi^{n-1}_p \rangle$, we have 
\begin{enumerate}
\item $$\rho(Q)(v)(w) = \rho(Q)(v \odot w) =0.$$ 
\item $$\rho(Q)(\xi^{n}_{p} )(\xi^l_p)= \rho(Q)(\xi^{n}_{p} \odot \xi^l_p) = 0, \  \forall l \leq n-1.$$ 
\item If $Q \in  ker(\mu_{2n-2})$, and $Q \not\in ker(\mu_{2n})$, then 
$$\rho(Q)(\xi^{n}_p)(\xi^{n}_p)  = \rho(Q)(\xi^{n}_p \odot \xi^{n}_p) =  c_{n, 2n-2} \mu_{2n}(Q)(p) \neq 0.$$

\item For $n \geq 2$ the same result holds if $C$ is a general curve of genus $g$, $p \in C$ is any point and $n < \frac{\sqrt{g-2} }{4} -1$. 

\item For $n=1$ the same result  holds  for every curve $C$ of genus at least 5 which is non hyperelliptic and non trigonal and $\forall p \in C$. 
\end{enumerate}

\end{THEOR}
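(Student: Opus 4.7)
The whole theorem is a specialisation of Proposition~\ref{mainprop} to $m=2n-2$, packaged so that the jump from $\ker\mu_{2n-2}$ to $\ker\mu_{2n}$ is detected by the second fundamental form on $\xi_p^n\odot\xi_p^n$. I split the plan into $n=1$ and $n\ge 2$. For $n=1$, items (1) and (2) are vacuous (the subspace $\langle\xi_p^1,\ldots,\xi_p^0\rangle$ is trivial and there is no $l\le 0$), and (3) is the CPT identity $\rho(Q)(\xi_p)(\xi_p)=\pi i\,\mu_2(Q)(p)$ recalled at the beginning of the section, giving $c_{1,0}=\pi i$.

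For $n\ge 2$, I would expand $v=\sum_{i=1}^{n-1}a_i\xi_p^i$, $w=\sum_{j=1}^{n-1}b_j\xi_p^j$ and use bilinearity of $\rho(Q)(-)(-)$. In (1), each summand $\rho(Q)(\xi_p^i)(\xi_p^j)$ has $i+j\le 2n-2<(2n-2)+1=m+1$, so Proposition~\ref{mainprop} with $m=2n-2$ forces it to vanish; in (2) the pair $(n,l)$ with $l\le n-1$ gives $n+l\le 2n-1=m+1$, and the same proposition yields $0$. For (3) the pair $(n,n)$ saturates $i+j=m+2=2n$, and Proposition~\ref{mainprop} gives
\[
\rho(Q)(\xi_p^n)(\xi_p^n)=c_{n,2n-2}\,\mu_{2n}(Q)(p).
\]
I would then verify that $c_{n,2n-2}\neq 0$ by substituting $m=2n-2$ into the closed form $c_{n,m}=2\pi i(-1)^{n-1}\prod_{j=0}^{n-2}(m-j)/((n-1)!(m+2)!)$: the product collapses to $(2n-2)(2n-3)\cdots n$, strictly positive for every $n\ge 2$. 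Since $Q\not\in\ker\mu_{2n}$, the global section $\mu_{2n}(Q)\in H^0(\omega_C^{\otimes(2n+2)})$ is nonzero, so $\mu_{2n}(Q)(p)\ne 0$ at a generic $p$; this completes (1)--(3).

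Parts (4) and (5) require dropping the genericity of $p$. The first observation is that $n<\sqrt{g-2}/4-1$ in (4) (resp.\ $n=1$ in (5)) certainly forces $n<2g-2$, so hypothesis~(*) of Proposition~\ref{mainprop} holds at \emph{every} $p\in C$; the computation leading to the formulas in (1)--(3) therefore remains valid pointwise. For (4) I would then quote \cite[Thm D]{ro}: under $2n<\sqrt{g-2}/2-2$ the Gaussian map $\mu_{2n}$ is surjective on a general curve, so $\mu_{2n}(Q)$ is nonzero for every $Q\in\ker\mu_{2n-2}\setminus\ker\mu_{2n}$, and the strict inequality in (3) holds off the finite zero locus of that section. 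For (5) I would invoke the classical base-point-freeness of the image of $\mu_2$ on $I_2(\omega_C)$ for non-hyperelliptic, non-trigonal curves of genus $\ge 5$, a standard consequence of the Enriques--Petri presentation of the canonical ideal, to conclude $\mu_2(Q)(p)\ne 0$ at every $p\in C$.

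The main obstacle is precisely this last step: items (1)--(3) are formal consequences of Proposition~\ref{mainprop} (which was proved by an explicit Laurent-series computation of $\eta_j^n$ and $\Phi_{n,Q}$), whereas promoting the strict inequality in (3) from ``generic $p$'' to ``any $p$'' in (4) and (5) requires non-trivial positivity input about higher Gaussian maps on general or Petri-general curves.
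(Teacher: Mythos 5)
Your proposal follows the paper's proof essentially verbatim: items (1)--(3) are obtained by specialising Proposition~\ref{mainprop} to $m=2n-2$ (with the $n=1$ case delegated to the Colombo--Pirola--Tortora identity, giving $c_{1,0}=\pi i$), and (4)--(5) rest on the surjectivity of $\mu_{2n}$ for the general curve and on the non-hyperelliptic, non-trigonal input for $\mu_2$. The one place where your write-up falls short is item (4): concluding that the inequality holds ``off the finite zero locus of $\mu_{2n}(Q)$'' only re-proves the generic-$p$ statement; as in your own treatment of (5), you must use that surjectivity of $\mu_{2n}$ makes its image all of $H^0(\omega_C^{\otimes(2n+2)})$, hence base-point free, so that for \emph{every} $p\in C$ there exists $Q\in\ker(\mu_{2n-2})$ with $\mu_{2n}(Q)(p)\neq 0$ --- which is how the paper (and the later Theorem~\ref{mui}) reads the clause ``the same result holds for any $p$''. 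For (5) the paper simply cites \cite[Theorem 6.1]{cf1} rather than rederiving pointwise nonvanishing from Enriques--Petri, but the content is the same.
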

\begin{proof}
In the case $n=1$ we already know by \cite[Theorem 3.1]{cpt} that if $Q \in  ker(\mu_{0})$, and $Q \not\in ker(\mu_{2})$, then 
$\rho(Q)(\xi^{1}_p)(\xi^{1}_p)  = \rho(Q)(\xi^{1}_p \odot \xi^{1}_p) =  c_{1, 0} \mu_{2}(Q)(p) \neq 0$ for $p$ general. 
Moreover, in \cite[Theorem 6.1]{cf1} it is proven that if $C$ is a smooth curve of genus $g \geq  5$, that is non hyperelliptic
and non trigonal, then, for any $p \in C$, there exists a quadric $Q \in I_2(\omega_C)$ such that $\mu_2(Q)(p) \neq 0$. So the above result for $n=1$ holds  for every curve $C$ of genus at least 5 which is non hyperelliptic and non trigonal and $\forall p \in C$, which shows $(5)$.

Assume $n \geq 2$, then by Proposition \ref{mainprop} we have $ \rho(Q)(\xi^k_p)(\xi^l_p) = 0$, if $1\leq l+k \leq 2n-1, $ so if $v, w \in \langle \xi^1_p,..., \xi^{n-1}_p \rangle$, $\rho(Q)(v)(w) = 0$, $\rho(Q)(\xi^{n}_{p} )(\xi^l_p) = 0$, $ \forall l \leq n-1$, so $(1)$ and $(2)$ hold. Moreover $\rho(Q)(\xi^{n}_p)(\xi^{n}_p) = c_{n,2n-2} \mu_{2n}(Q)(p) $. Then if $p$ is a general point, $\rho(Q)(\xi^{n}_p)(\xi^{n}_p) = c_{n,2n-2} \mu_{2n}(Q)(p)  \neq 0$, since $Q \not\in ker(\mu_{2n})$, so $(3)$ holds. 

To prove $(4)$, first notice that if $n < \frac{\sqrt{g-2} }{4} -1$, then $n < 2g-2$, so hypotheses $(*)$ are satisfied. In  \cite{ro} it is shown that for any $k < \frac{\sqrt{g-2} }{2} -2$, the map $\mu_k$ is surjective, hence its image is $H^0((k+2)\omega_C)$ which is base point free (this generalises a result obtained in \cite{ccm}, saying that for a general curve  $C$ of genus $\geq 18$, the map $\mu_2$ is surjective). Therefore if $n < \frac{\sqrt{g-2} }{4} -1 < 2g-2$,  $\forall p \in C$, there exists $Q \in  ker(\mu_{2n-2})$ such that $\mu_{2n}(Q)(p)  \neq 0$. 

\end{proof}

Finally we show the following 

\begin{THEOR}
\label{mui}
Assume that for a curve $C$ of genus $g \geq 4$ and for some $n \leq 3g-3$, we have:

$ker(\mu_{2n}) \subsetneq ker(\mu_{2n-2}) \subsetneq  ker(\mu_{2n-4}) .... \subsetneq ker(\mu_{2})  \subsetneq ker(\mu_{0}) = I_2(K_C). $

Let $Y$ be a  germ of  a totally geodesic submanifold of ${\mathcal A}_g$ generically contained in ${\mathcal M}_g$ passing through $j(C)$. Then, for a generic $p \in C$, we have: 
\begin{enumerate}

\item $T_{(jC)}Y \cap \langle \xi^1_p, ..., \xi^n_p \rangle = \{0\}$. 

\item $\dim Y \leq 3g-3-n$. 
\end{enumerate}
\end{THEOR}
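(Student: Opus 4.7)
The plan is to argue by contradiction, using the second fundamental form characterisation of totally geodesic submanifolds. Since $Y$ is a germ of a totally geodesic submanifold of $\mathcal{A}_g$ generically contained in $\mathcal{M}_g$, and by \cite{cpt} the second fundamental form $II^*$ of the Torelli immersion at $[C]$ coincides (up to a constant) with the Hodge-Gaussian map $\rho$, the standard totally geodesic criterion forces
\[
\rho(Q)(v \odot w) = 0 \quad \text{for every } Q \in I_2(\omega_C) \text{ and every } v, w \in T_{j(C)}Y \subset H^1(C, T_C).
\]

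To choose a good $p$, first invoke the strict chain hypothesis: for each $k \in \{1, \dots, n\}$ it supplies a quadric $Q_k \in \ker(\mu_{2k-2}) \setminus \ker(\mu_{2k})$, so $\mu_{2k}(Q_k)$ is a nonzero section of $\omega_C^{\otimes(2k+2)}$ and vanishes on a finite set $Z_k \subset C$. Pick $p$ outside the finite union $Z_1 \cup \dots \cup Z_n$ and also outside the finitely many points where hypothesis $(*)$ of Theorem~\ref{thmA} fails. Then Theorem~\ref{thmA} applies at $p$ to each $Q_k$, and $\mu_{2k}(Q_k)(p) \neq 0$ for all $k = 1,\dots,n$ simultaneously.

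Now suppose, for contradiction, that $T_{j(C)}Y \cap \langle \xi^1_p, \dots, \xi^n_p \rangle$ contains a nonzero vector, which we write as $v = \sum_{k=1}^{k_0} \lambda_k \xi^k_p$ with $k_0 \leq n$ and $\lambda_{k_0} \neq 0$. Apply parts (1)--(3) of Theorem~\ref{thmA} to $Q_{k_0}$: for pairs $(i,j)$ with $\max(i,j) \leq k_0 - 1$ part (1) gives $\rho(Q_{k_0})(\xi^i_p \odot \xi^j_p) = 0$; for pairs with $\max(i,j) = k_0$ and $\min(i,j) \leq k_0 - 1$, part (2) gives the same vanishing; part (3) yields $\rho(Q_{k_0})(\xi^{k_0}_p \odot \xi^{k_0}_p) = c_{k_0, 2k_0-2}\,\mu_{2k_0}(Q_{k_0})(p) \neq 0$. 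Expanding bilinearly, only the $(k_0,k_0)$-term survives, giving
\[
\rho(Q_{k_0})(v \odot v) = \lambda_{k_0}^2\, c_{k_0, 2k_0-2}\, \mu_{2k_0}(Q_{k_0})(p) \neq 0,
\]
a contradiction with the totally geodesic vanishing. This proves (1). Part (2) is immediate: $\langle \xi^1_p, \dots, \xi^n_p \rangle$ has dimension $n$ in $H^1(C,T_C) \cong T_{[C]}\mathcal{M}_g$, which has dimension $3g-3$, so trivial intersection with $T_{j(C)}Y$ forces $\dim Y = \dim T_{j(C)}Y \leq 3g-3-n$. The only mildly delicate step is verifying that the $Q_{k_0}$'s and the point $p$ can be chosen simultaneously, and this is automatic since only finitely many zero divisors on $C$ need be avoided.
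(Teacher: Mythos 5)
Your proposal is correct and follows essentially the same route as the paper: choose, for each level of the strict kernel chain, a quadric not in the next kernel, remove the finitely many zeros of the resulting sections to fix a generic $p$, and then use the leading coefficient of a putative nonzero $v \in T_{j(C)}Y \cap \langle \xi^1_p,\dots,\xi^n_p\rangle$ together with Theorem \ref{thmA} to produce $\rho(Q)(v\odot v)\neq 0$, contradicting total geodesicity; the dimension bound then follows from the same count. The only difference is a harmless index shift in naming the quadrics.
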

\begin{proof}
For every $i =0,...,2n-2$ choose a quadric $Q_i \in Ker (\mu_{2i})$ such that $Q_i \not\in ker(\mu_{2i+2})$ and denote by $U: = C \setminus \cup_{i=0}^{2n-2} Z(\mu_{2i}(Q_i))$, where $Z(\mu_{2i}(Q_i))$ denotes the zero set of the section  $\mu_{2i}(Q_i) \in H^0((2i+2)\omega_C)$. 
Take a  point $ p\in U$.  Assume $Y$ is a germ of totally geodesic submanifold of ${\mathcal A}_g$ generically contained in ${\mathcal M}_g$ passing through $j(C)$. Then $W := T_{j(C)}Y \subset H^1(C, T_C)$.  Denote by $V := \langle \xi^1_p, ..., \xi^n_p \rangle$, take $ v \in V \cap W$, and assume $v \neq 0$. Then $v = a_1 \xi^1_p + ...+ a_{n} \xi^{n}_p$. Let $1 \leq k \leq n$ be the maximum integer such that $a_{k} \neq 0$, then $v \in \langle \xi^1_p,...,\xi^{k}_p \rangle$. Then by Theorem \ref{thmA}, we have: 
$$\rho(Q_{k-1})(v \odot v)   = \sum^k_{i,j=1}a_i a_j \rho(Q_{k-1}) (\xi^i_p \odot \xi^j_p) =$$
$$= a^2_k\rho(Q_{k-1})(\xi^{k}_p \odot \xi^{k}_p)=  a^2_k c_{k,2k-2} \cdot \mu_{2k}(Q_{k-1})(p) \neq 0,$$ since $Q_{k-1} \not\in ker(\mu_{2k})$ and $p \in U$.  This is a contradiction, since $Y$ is totally geodesic, so $\rho(Q_{k-1})(v \odot v) =0$.    Hence $V \cap W = (0)$ and we have 
$$\dim (V +W) = \dim(V) + \dim(W) \leq \dim H^1(C, T_C) = 3g-3, $$
so $\dim(Y)= \dim (W) \leq 3g-3-n$. 

\end{proof}

\begin{REM}
\label{R-O}
Notice the the assumptions of Theorem \ref{mui} can be written as follows: $\mu_{2l} \neq 0$, $\forall l =0,...,n$. 
By \cite[Theorem D]{ro}, these assumptions are satisfied for the general curve of genus $g$,  if $n < \frac{\sqrt{g-2} }{4} -1$.

\end{REM}

We have the following
\begin{COR}
\label{bicanonical1}
\begin{enumerate}

\item Assume $C$ satisfies the assumptions of Theorem \ref{mui} and let $Y$ be a  germ of  a totally geodesic submanifold of ${\mathcal A}_g$ generically contained in ${\mathcal M}_g$ passing through $j(C)$.  Then there exists a finite set $S \subset C$ such that for every $p \in U = C \setminus S$, the projective tangent space ${\mathbb P}(T_{j(C)}Y )$ does not intersect the $(n-1)^{th}$ osculating plane of the bicanonical curve at $\phi_{|2K_C|}(p)$.
\item For the general curve of genus $g$, if $n < \frac{\sqrt{g-2} }{4} -1$, then  for any $p \in C$, ${\mathbb P}(T_{j(C)}Y )$ does not intersect the $(n-1)^{th}$ osculating plane of the bicanonical curve at $\phi_{|2K_C|}(p)$. 

\end{enumerate}

\end{COR}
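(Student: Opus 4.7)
The plan is to combine the geometric interpretation of the subspaces $\langle \xi^1_p, \dots, \xi^n_p \rangle$ given in Remark \ref{bicanonical} with the linear-algebra transversality statements already established in Theorem \ref{mui} and Theorem \ref{thmA}. No new computation with $\rho$ is needed; the corollary is a translation of existing results into projective geometry.

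For part (1), I would first recall that by Remark \ref{bicanonical} the projective subspace $\mathbb{P}(\langle \xi^1_p, \dots, \xi^n_p \rangle) \subset \mathbb{P}(H^1(C,T_C))$ is precisely the $(n-1)^{th}$ osculating plane to the bicanonical image of $C$ at $\phi_{|2K_C|}(p)$. Next, Theorem \ref{mui}(1) asserts $T_{j(C)}Y \cap \langle \xi^1_p, \dots, \xi^n_p \rangle = \{0\}$ for generic $p$. I would make the exceptional locus explicit by reading the proof of Theorem \ref{mui}: choose quadrics $Q_i \in \ker(\mu_{2i}) \setminus \ker(\mu_{2i+2})$ for $i=0,\dots,n-1$ (existence is guaranteed by the chain-of-inclusions hypothesis), and set $S := \bigcup_{i=0}^{n-1} Z\bigl(\mu_{2i+2}(Q_i)\bigr) \subset C$. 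Each $\mu_{2i+2}(Q_i)$ is a nonzero holomorphic section of $\omega_C^{\otimes 2i+4}$, so $S$ is finite. For $p \in U = C \setminus S$ the argument of Theorem \ref{mui} goes through verbatim, and the vanishing of the linear intersection translates to empty intersection of projective subspaces.

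For part (2), I would invoke Remark \ref{R-O} to see that under $n < \frac{\sqrt{g-2}}{4} - 1$ the hypotheses of Theorem \ref{mui} hold for the general curve. The point is that Theorem \ref{thmA}(4) upgrades the ``generic $p$'' conclusion to an ``arbitrary $p$'' conclusion in this regime: the surjectivity of $\mu_{2k}$ (via \cite{ro}, cf.\ the proof of Theorem \ref{thmA}(4)) means that for \emph{every} $p \in C$ and every $k \leq n$, there exists $Q \in \ker(\mu_{2k-2})$ with $\mu_{2k}(Q)(p) \neq 0$. Repeating the contradiction argument of Theorem \ref{mui}---given $0 \neq v = a_1 \xi^1_p + \dots + a_n \xi^n_p \in T_{j(C)}Y$, pick the maximal $k$ with $a_k \neq 0$ and a pointwise-adapted $Q \in \ker(\mu_{2k-2}) \setminus \ker(\mu_{2k})$ with $\mu_{2k}(Q)(p) \neq 0$, then use Theorem \ref{thmA}(3) to obtain $\rho(Q)(v \odot v) = a_k^2\, c_{k,2k-2}\, \mu_{2k}(Q)(p) \neq 0$, contradicting total geodesy---now goes through for every $p$.

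The only mild obstacle is the bookkeeping for part (1), namely pinning down that the exceptional set $S$ is exactly the zero locus of finitely many explicit sections and therefore finite; everything else is a formal consequence of Remark \ref{bicanonical}, Theorem \ref{mui} and Theorem \ref{thmA}.
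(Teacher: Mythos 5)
Your proposal is correct and follows essentially the same route as the paper: part (1) is read off from Remark \ref{bicanonical} together with the proof of Theorem \ref{mui}, and part (2) uses the surjectivity of the $\mu_{2l}$ from \cite{ro} (plus the observation that $n<2g-2$ so hypothesis $(*)$ holds at every point) to choose, for each $p$, a quadric $Q\in\ker(\mu_{2k-2})$ with $\mu_{2k}(Q)(p)\neq 0$ and rerun the contradiction argument with $U=C$. Your explicit description of the exceptional set $S$ as the union of the zero loci of the sections $\mu_{2i+2}(Q_i)$ is exactly what the paper's proof of Theorem \ref{mui} provides (modulo an indexing typo there), so no further justification is needed.
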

\begin{proof}
$(1)$ follows immediately from the proof of Theorem \ref{mui}, and Remark \ref{bicanonical}. 
To prove $(2)$, first notice that $n < \frac{\sqrt{g-2} }{4} -1< 2g-2$, so assumption $(*)$ holds and $\forall p \in C$ the subspace ${\mathbb P}(\langle \xi_p^1,...,\xi_p^n\rangle )$ is intrinsically defined. If $C$ is general, by  \cite[Theorem D]{ro} we know that $\mu_{2l}$ is surjective  $\forall l =0,...,n$, hence $Im(\mu_{2l})$ is base point free.  So, $\forall i = 0,..., 2n-2$ and $\forall p \in C$, there exists a quadric $Q_i \in Ker (\mu_{2i})$ such that $\mu_{2i+2}(Q_i)(p)  \neq 0$. Then the proof of Theorem \ref{mui} applies verbatim, choosing $U =C$. 

\end{proof}

\begin{REM}
By Theorem \ref{thmA}, since  for a generic point $p \in C$  a basis for $H^1(C, T_C)$ is given by $\xi^1_p,...,\xi^{3g-3}_p$, if there exists a curve $C$ of genus $g \geq 4$ such that 

$$ker(\mu_{6g-6}) \subsetneq ker(\mu_{6g-8}) \subsetneq  ker(\mu_{6g-10}) .... \subsetneq ker(\mu_{2})  \subsetneq ker(\mu_{0}) = I_2(K_C),$$
then there do not exist germs of totally geodesic submanifolds of ${\mathcal A}_g$ generically contained in ${\mathcal M}_g$ passing through $j(C)$. 

Nevertheless, we aspect that such curves do not exist.

\end{REM}

Now we prove a result that only concerns higher Gaussian maps. 

\begin{THEOR}
\label{rank}
For every non hyperelliptic  curve $C$ of genus $g \geq 4$, we have: 
\begin{enumerate}
\item $ker (\mu_{6g-6}) =0$, hence $\mu_{2k} \equiv 0$, $\forall k > 3g-3$. 
\item $\forall 0 \leq l \leq 3g-3$, $\dim (ker (\mu_{6g-6-2(l+1)})) \leq (l+1)^2$, hence $rank(\mu_{6g-6-2l} ) \leq (l+1)^2$. 
\end{enumerate}

\end{THEOR}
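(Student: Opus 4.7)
The plan rests on two ingredients: the injectivity of the second fundamental form $\rho : I_2(\omega_C) \to Sym^2 H^0(\omega_C^{\otimes 2})$ proved in \cite{cf-trans, cfg}, and Proposition \ref{mainprop}, which evaluates $\rho(Q)$ on pairs of Schiffer variations at a general point $p$ when $Q$ lies in $\ker \mu_m$. The idea is to view $\rho(Q)$ as a symmetric bilinear form on $H^1(C,T_C)$ via the Serre-duality pairing and evaluate it on the basis $\{\xi^i_p \odot \xi^j_p : 1 \leq i \leq j \leq 3g-3\}$ of $Sym^2 H^1(C,T_C)$ (valid for $p$ general). The driving observation is that $Q \in \ker \mu_m$ forces $\rho(Q)(\xi^a_p \odot \xi^b_p) = 0$ whenever $a+b \leq m+1$, so $\rho(Q)$ is pinned down by a small number of remaining evaluations.

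For part (1), I would take $Q \in \ker \mu_{6g-6}$ and aim to prove $\rho(Q) = 0$. Using the chain $\ker \mu_m \subset \ker \mu_{m'}$ for $m \geq m'$, the quadric $Q$ lies in $\ker \mu_{2k}$ for every $0 \leq k \leq 3g-3$. Proposition \ref{mainprop} with $m = 6g-6$ kills $\rho(Q)(\xi^a_p \odot \xi^b_p)$ for all $(a,b)$ with $a+b \leq 6g-5$. The only admissible pair left in the range $1 \leq a,b \leq 3g-3$ is $(3g-3, 3g-3)$. For it I would invoke the second formula of Proposition \ref{mainprop} with $m = 6g-8$ and $n = 3g-3$ (so that $m+2-n = 3g-3$), obtaining
\[
\rho(Q)(\xi^{3g-3}_p \odot \xi^{3g-3}_p) = c_{3g-3,\,6g-8}\,\mu_{6g-6}(Q)(p) = 0.
\]
Hence $\rho(Q)$ annihilates a spanning set of $Sym^2 H^1(C,T_C)$, so $\rho(Q) = 0$, and injectivity of $\rho$ yields $Q = 0$. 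The statement $\mu_{2k} \equiv 0$ for $k > 3g-3$ is then immediate, since the source of each such $\mu_{2k}$ is $\ker \mu_{2k-2} \subset \ker \mu_{6g-6} = 0$.

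For part (2), set $m = 6g-8-2l$, so $\ker \mu_m = \ker \mu_{6g-6-2(l+1)}$, and take $Q \in \ker \mu_m$. Proposition \ref{mainprop} gives $\rho(Q)(\xi^a_p \odot \xi^b_p) = 0$ for every pair with $a+b \leq m+1 = 6g-7-2l$, so $\rho(Q)$ is determined by its values on the \emph{high} pairs $1 \leq a \leq b \leq 3g-3$ with $a+b \geq 6g-6-2l$. Via the substitution $c = 3g-3-b$, $d = 3g-3-a$ these correspond to $0 \leq c \leq d \leq 3g-4$ with $c+d \leq 2l$, and a direct count (splitting into $d \leq l$ and $d > l$) shows there are at most $(l+1)^2$ of them. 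The linear map $\Phi : \ker \mu_m \to \C^{(l+1)^2}$ sending $Q$ to the tuple of evaluations of $\rho(Q)$ on the high pairs is then injective: if $\Phi(Q) = 0$, then $\rho(Q)$ vanishes on the whole Schiffer basis, hence $\rho(Q) = 0$, and so $Q = 0$ by injectivity of $\rho$. This yields $\dim \ker \mu_{6g-6-2(l+1)} \leq (l+1)^2$, and the rank estimate follows because the source of $\mu_{6g-6-2l}$ is $\ker \mu_{6g-8-2l}$.

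The main subtlety I anticipate is the ``diagonal corner'' $(3g-3,3g-3)$ in part (1): Proposition \ref{mainprop} applied with $m = 6g-6$ misses exactly this pair, and one must drop to $m = 6g-8$ to read off its contribution from $\mu_{6g-6}(Q)$, which vanishes by hypothesis. Beyond this reindexing, the argument is essentially linear algebra on $Sym^2 H^1(C,T_C)$ together with the combinatorial count of high pairs.
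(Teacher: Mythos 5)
Your proof is correct and follows essentially the same route as the paper: evaluate $\rho(Q)$ on the basis of higher Schiffer variations at a general point via Proposition \ref{mainprop}, count the possibly nonzero entries (the count of unordered ``high'' pairs being exactly $(l+1)^2$, as in the paper's $\sum_{k=0}^{l}(2k+1)$), and conclude by the injectivity of $\rho$. The only remark is that the ``diagonal corner'' you flag in part (1) is not actually an issue: with $m=6g-6$ the vanishing condition $a+b\leq 6g-5$ already covers $(3g-3,3g-3)$ since $6g-6\leq 6g-5$, so your (valid) detour through $m=6g-8$ and $\mu_{6g-6}(Q)(p)=0$ is unnecessary.
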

\begin{proof}
If $Q \in ker(\mu_{6g-6})$, then by Proposition \ref{mainprop}, for $p$ general, we have $\rho(Q)(\xi_p^n)(\xi_p^l) =0$, $\forall n,l \leq 3g-3$. Since $\{\xi_p^1,...,\xi_p^{3g-3}\}$ is a basis of $H^1(C, T_C)$, this implies that $\rho(Q) \equiv 0$. 
Hence $Q =0$, since $\rho$ is injective (\cite[Corollary 3.4]{cf-trans}). This shows $(1)$. 

For $(2)$, let us first work out the case $l=0$. By Proposition \ref{mainprop}, if $Q \in ker(\mu_{6g-8})$, the matrix representing the quadric $\rho(Q)$ in the basis $\xi_p^1,...,\xi_p^{3g-3}$, where $p$ is general, is the following: 
$$ {\small \left( \begin{array}{cccccc}
0&.&.&.&0&0\\
.&.&.&.&.&.\\
.&.&.&.&.&.\\
.&.&.&.&.&.\\
0&.&.&.&0&0\\
0&.&.&.&0&a\\
\end{array} \right)}$$
where $a = \rho(Q)(\xi_p^{3g-3} \odot \xi_p^{3g-3} )$. So the restriction of $\rho$ to $ker(\mu_{6g-8})$ has rank at most one, and since $\rho$ is injective, $\dim  ker(\mu_{6g-8}) \leq 1$. So, if $ker(\mu_{6g-8})=0$, then $\mu_{6g-6} \equiv 0$. If  $ker(\mu_{6g-8})$ is one dimensional, then $\mu_{6g-6}$  is injective of rank 1.

In general, as above, using Proposition \ref{mainprop}, $\forall Q \in ker( \mu_{6g-6-2(l+1)}) $, and $p$ a general point, we have 
$$\rho(Q) (\xi_p^n \odot \xi_p^k) = 0, \  \forall n,k \leq 3g-4-l, \ \rho(Q) (\xi_p^{3g-3-l} \odot \xi_p^k) = 0, \  \forall k \leq 3g-4-l,$$
$$\rho(Q)(\xi_p^{3g-l+t} \odot \xi_p^k) = 0, \  \forall k \leq 3g-7-l-t, \ \forall t \leq l-3.$$

So, looking at the matrix representing $\rho(Q)$ in the basis $\{\xi_p^1,..., \xi_p^{3g-3}\}$, one immediately sees that the restriction of $\rho$ to $ker (\mu_{6g-6-2(l+1)})$ has rank at most $\sum_{k=0}^l(2k+1) = (l+1)^2$.  Hence by the injectivity of $\rho$, $\dim(ker (\mu_{6g-6-2(l+1)} )) \leq (l+1)^2$, and thus $rank(\mu_{6g-6-2l} ) \leq (l+1)^2$. This concludes the proof. 
\end{proof}

\begin{REM}
Notice that $\mu_{6g-6-2l} : ker (\mu_{6g-8-2l}) \ra H^0(C, \omega_C^{\otimes (6g-4-2l)})$. So the estimate in $(2) $ of Theorem \ref{rank} gives information only when $(l+1)^2 \leq h^0(C, \omega_C^{\otimes (6g-4-2l)}) = (g-1)(12g-9-4l)$, so $l \leq 1-2g + \sqrt{(g-1)(16g-9)} \leq 2g-2$, hence $6g-6-2l \geq 2g-2$. 
\end{REM}



\end{document}